\newcommand{\Ext}{\operatorname{Ext}}
\newcommand{\Tor}{\operatorname{Tor}}
\newcommand{\Hom}{\operatorname{Hom}}
\newcommand{\HH}{{\operatorname H}}
\newcommand{\dell}{\partial}
\newcommand{\coker}{\operatorname{coker}}
\newcommand{\fm}{{\mathfrak{m}}}
\newcommand{\rank}{\operatorname{rank}}
\newcommand{\pd}{\operatorname{pd}}
\newcommand{\ld}{\operatorname{ld}}
\newcommand{\ZZ}{\mathbb{Z}}
\newcommand{\NN}{\mathbb{N}}
\newcommand{\RR}{\mathbb{R}}
\newcommand{\QQ}{\mathbb{Q}}
\newcommand{\FF}{\mathbf{F}}
\newcommand{\GG}{\mathbf{G}}
\newcommand{\CC}{\mathbf{C}}
\newcommand{\xra}{\xrightarrow}
\newcommand{\onto}{\twoheadrightarrow}
\newcommand{\gr}{\operatorname{gr}}
\newcommand{\grR}{\operatorname{gr}_\mathfrak{m}(R)}
\newcommand{\grM}{\operatorname{gr}_\mathfrak{m}(M)}
\newcommand{\ds}{\displaystyle}
\theoremstyle{plain}
\newtheorem{theorem}{Theorem}[section]
\newtheorem{proposition}[theorem]{Proposition}
\newtheorem{lemma}[theorem]{Lemma}
\newtheorem{corollary}[theorem]{Corollary}
\newtheorem{fact}[theorem]{Fact}
\newtheorem*{TheoremA}{Theorem A}
\newtheorem*{TheoremB}{Theorem B}
\theoremstyle{definition}
\newtheorem{definition}[theorem]{Definition}
\newtheorem{definitions}[theorem]{Definitions}
\newtheorem{example}[theorem]{Example}
\newtheorem{setup}[theorem]{Setup}
\theoremstyle{remark}
\newtheorem{remark}[theorem]{Remark}
\newtheorem{remarks}[theorem]{Remarks}
\newtheorem{question}[theorem]{Question}
\newtheoremstyle{myplain}
     {1em plus 0.15em minus 0.05em}
     {1em plus 0.15em minus 0.05em}
     {\itshape}
     {}
     {\bf}
     {.}
     {0.5em}
     {}
\newtheoremstyle{mydef}
     {1em plus 0.15em minus 0.05em}
     {1em plus 0.15em minus 0.05em}
     {}
     {}
     {\bf}
     {.}
     {0.5em}
     {}
\newtheoremstyle{myrmk}
     {1em plus 0.15em minus 0.05em}
     {1em plus 0.15em minus 0.05em}
     {}
     {}
     {\itshape}
     {.}
     {0.5em}
     {}
\theoremstyle{myplain}
\theoremstyle{myrmk}
\theoremstyle{mydef}
\begin{document}
\title[Eventually Linear Partially Complete Resolutions]{Eventually Linear Partially Complete Resolutions over a Local Ring with $\fm^4=0$}
\author[K.\ A.\ Beck]{Kristen A.\ Beck}
\address{Kristen A.\ Beck,
Department of Mathematics, The University of Arizona,
617 N.\ Santa Rita Ave.,
Tucson, AZ 85721 U.\ S.\ A.}
\email{kbeck@math.arizona.edu}

\thanks{{\em Date:} \today.}
\thanks{{\em 2000 Mathematics Subject Classification.} 13D05, 13D07}
\thanks{{\em Key words and phrases.} Linear resolution, complete resolution, totally reflexive module, Hilbert series.}

\maketitle

\begin{abstract}
 {We} classify the Hilbert polynomial of a local ring $(R,\fm)$ satisfying $\fm^4=0$ which admits an eventually linear resolution $\CC$ which is `partially' complete --- that is, for which $\HH^i\Hom_R(\CC,R)$ vanishes for all $i\gg 0$.  As a corollary to our main result,  {we} show that an $\fm^4=0$ local ring can admit certain classes of asymmetric partially complete resolutions only if its Hilbert polynomial is symmetric.  Moreover,  {we} show that the Betti sequence associated to an eventually linear partially complete resolution over an $\fm^4=0$ local ring cannot be periodic of period two or three.
\end{abstract}

\section*{Introduction}

Let $(R,\fm)$ be a commutative local Noetherian ring with unique maximal ideal $\fm$.  A finitely generated $R$-module $M$ is said to be totally reflexive if it is reflexive and if both $\Ext_R^i(M,R)$ and $\Ext_R^i(\Hom_R(M,R),R)$ vanish for all $i>0$. It is straightforward to see that every free $R$-module is totally reflexive, but not \emph{vice versa}.  For example, over a Gorenstein local ring, the maximal Cohen-Macaulay modules and totally reflexive modules coincide.  Furthermore, while there are several known examples of non-free totally reflexive modules over non-Gorenstein rings (cf.\ \cites{AvGaPe97,Be12,TakWat07,Vel02}, among others), the classification of all local rings which admit such modules is far from complete.

Perhaps the structurally simplest non-Gorenstein local rings $(R,\fm)$ to admit non-free totally reflexive modules are those satisfying the condition $\fm^3=0\,(\neq\fm^2)$.  In 2003, Yuji Yoshino provided a characterization of such rings; cf.\ \cite[Theorem 3.1]{Yo03}.  In particular, the author proved that if a local ring $(R,\fm)$ satisfying $\fm^3=0$ admits a non-free totally reflexive module $M$, then (a) the Betti numbers of $M$ are constant, (b) $R$ is Koszul, and (c) the Hilbert polynomial of $R$ is balanced --- that is, it has a root of $-1$.


One can see that Yoshino's characterization in \cite{Yo03} does not extend to the class of local rings $(R,\fm)$ satisfying $\fm^4=0\,(\neq\fm^3)$; indeed, there are many examples of totally reflexive modules with non-constant Betti numbers over such rings.
Even more noteworthy, however, is an example constructed by Jorgensen and \c{S}ega in 2005, in which the authors exhibit a local ring $(R,\fm)$ satisfying $\fm^4=0$ and a totally reflexive $R$-module $M$ such that the Betti numbers of $M$ are constant, while the Betti numbers of $\Hom_R(M,R)$ grow exponentially; \cite{JoSeg05}*{Theorem 1.2}.

The results in this paper are motivated by those of both \cite{Yo03} and \cite{JoSeg05}, but with respect to a more general class of modules: those with eventually linear partially complete resolutions, where the latter characteristic is defined by the vanishing of $\Ext_R^i(M,R)$ and $\Ext_R^i(\Hom_R(M,R),R)$ for all $i\gg 0$.  Given a local ring $(R,\fm)$ satisfying $\fm^4=0$ which admits such a module $M$,  {we} would like to (a) characterize the Hilbert polynomial of $R$, and (b) give necessary conditions for $M$ to possess an asymmetric (partially) complete resolution.  However, many of our results concerning the Hilbert polynomial of $R$ regard a still more general class of modules: those which satisfy at least the vanishing of $\Ext_R^i(M,R)$ for $i\gg 0$.

In Sections \ref{sec:prelim} and \ref{sec:syseq}  {we} outline preliminary definitions and concepts which are extensively used throughout the paper.  Section \ref{sec:betti} is concerned with the growth of the Betti sequence; in addition to identifying particular growth rates, this section builds up to the following result.

\begin{TheoremA}
Let $M$ be a finitely generated module with an eventually linear minimal free resolution over a local ring $(R,\fm)$ satisfying $\fm^4=0$.  If $\Ext_R^*(M,R)$ eventually vanishes, then the Betti sequence of $M$ is not eventually periodic of period two or three.
\end{TheoremA}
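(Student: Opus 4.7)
The plan is to distill the eventually-linear hypothesis (together with $\fm^4=0$) into a three-term linear recurrence on the Betti numbers $\beta_i := \beta_i^R(M)$, and then to rule out eventual minimal periods $2$ and $3$ by examining its characteristic polynomial. Throughout, write $e_j := \dim_k(\fm^j/\fm^{j+1})$, so that $H_R(t) = 1 + e_1 t + e_2 t^2 + e_3 t^3$. A $\beta$-sequence of minimal period $2$ or $3$ is in particular not eventually zero, so we may assume $\pd_R M = \infty$, and hence $\beta_i > 0$ for all $i$.

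The starting point, which I expect follows from the machinery of Section \ref{sec:betti}, is the recurrence
\begin{equation}\label{eq:Arec}
\beta_i \;=\; e_1\,\beta_{i-1}\;-\;e_2\,\beta_{i-2}\;+\;e_3\,\beta_{i-3}\qquad(i\gg 0).
\end{equation}
One obtains it by passing to a high syzygy $\Omega^n M$ whose minimal free resolution is linear, taking the associated graded (which is then exact), and reading off the Hilbert-series identity $H_{\gr(\Omega^n M)}(t) = t^{d_n}\,H_R(t)\,P_{\Omega^n M}(-t)$; since $\fm^4 = 0$ makes the left-hand side a polynomial of degree $\le d_n + 3$, the coefficients of $t^k$ for $k \ge 4$ on the right must vanish, which unpacks to \eqref{eq:Arec}. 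The eventual vanishing of $\Ext_R^i(M,R)$ is what makes this passage to a high syzygy safe.

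With \eqref{eq:Arec} in hand, suppose first that $\beta_{i+2} = \beta_i$ for $i \gg 0$. Applying \eqref{eq:Arec} at indices $i+3$ and $i+4$ produces
\[
(1+e_2)\,\beta_{i+1} = (e_1+e_3)\,\beta_i
\quad\text{and}\quad
(1+e_2)\,\beta_i = (e_1+e_3)\,\beta_{i+1}.
\]
Multiplying and using positivity gives $(1+e_2)^2 = (e_1+e_3)^2$, hence $1+e_2 = e_1+e_3$, and then $\beta_{i+1} = \beta_i$; so the minimal period is $1$, not $2$. Suppose instead $\beta_{i+3} = \beta_i$ with minimal period $3$. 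The sequence $\{\beta_i\}$ is then a simultaneous solution of \eqref{eq:Arec}, whose characteristic polynomial is $\chi(\lambda) = \lambda^3 - e_1 \lambda^2 + e_2 \lambda - e_3$, and of the period-$3$ recurrence, whose characteristic polynomial is $\lambda^3 - 1 = (\lambda - 1)(\lambda^2 + \lambda + 1)$. Ruling out constant sequences forces $\gcd(\chi, \lambda^3 - 1)$ to have degree at least $2$, so, since $\lambda^2 + \lambda + 1$ is $\RR$-irreducible, it must divide $\chi$. Writing $\chi(\lambda) = (\lambda^2 + \lambda + 1)(\lambda - c)$ and comparing coefficients forces $e_2 = -e_1$, contradicting $e_1 = \embdim R \ge 1$ and $e_2 \ge 0$.

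The genuinely substantive part of this argument is establishing \eqref{eq:Arec}, where exactness of the associated graded of the minimal free resolution must be justified in high enough homological degree; the period-$2$ and period-$3$ arguments themselves are short linear-algebra manipulations once the recurrence is in place. I expect the recurrence to appear explicitly (or implicitly) in Section \ref{sec:betti}, in which case this proof reduces to the two short case analyses above.
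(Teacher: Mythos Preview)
Your argument is correct and takes a genuinely different route from the paper's. Both proofs rest on the equations of Setup~\ref{setup:system}, but you use only the first row of \eqref{setup:matrixeq}---the forward recurrence $b_{i+3}=eb_{i+2}-fb_{i+1}+gb_i$---whereas the paper uses both rows. The paper row-reduces to \eqref{setup:redmatrixeq} and solves for $f$ via the quantities $\Delta_d(i,j)$ of \eqref{setup:deltas}: under period two this yields $f=-1$ outright, and under period three a case analysis on the pairs $(n,m)$ with $\Delta_1(n,m)\neq 0$ eventually forces equalities such as $f=-e$ or $b_n=0$. Your approach instead treats the forward recurrence as a linear recurrence with characteristic polynomial $\chi(\lambda)=\lambda^3-e\lambda^2+f\lambda-g$: period two collapses to the pair $(1+f)b_{i+1}=(e+g)b_i$ and $(1+f)b_i=(e+g)b_{i+1}$, forcing $b_{i+1}=b_i$; period three forces the minimal polynomial of the (real, nonconstant) sequence to contain $\lambda^2+\lambda+1$, so $\lambda^2+\lambda+1\mid\chi$ and comparing coefficients gives $f=-e$.

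Your route is more elementary and in fact proves more than you claim. The forward recurrence is exactly Proposition~\ref{prop:linres}(1), which (via Lemma~\ref{lemma:linres}) requires only eventual linearity of the resolution and \emph{not} the vanishing of $\Ext_R^*(M,R)$; the Ext hypothesis enters Setup~\ref{setup:system} only through the \emph{second} row of \eqref{setup:matrixeq}, which you never invoke. So your remark that Ext vanishing is ``what makes this passage to a high syzygy safe'' is a misattribution---passing to a high syzygy to obtain linearity needs only $\ld_R(M)<\infty$---and your argument actually establishes the conclusion of Theorem~A without the Ext assumption. (Your Hilbert-series identity is also slightly garbled: the shift $t^{d_n}$ is spurious in the local setting, and the left-hand side has degree at most $3$; but the recurrence you extract from it is the correct one and appears verbatim as the first row of \eqref{setup:matrixeq}.)
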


The main goal of Section \ref{sec:hilbert} is to characterize the Hilbert polynomial of an $\fm^4=0$ local ring admitting eventually linear partially complete resolutions with certain types of acyclicity. 
However, as mentioned above, most of our results --- including the following --- are stated in a more general setting.

\begin{TheoremB}\label{thm:mainpolyexp}
Let $M$ be a finitely generated module with an eventually linear minimal free resolution over a local ring $(R,\fm)$ satisfying $\fm^4=0$, and suppose that $\Ext_R^*(M,R)$ eventually vanishes.
\begin{enumerate}
\item If the Betti sequence of $M$ has non-exceptional polynomial growth, then the Hilbert polynomial of $R$ is symmetric.
\item If the Betti sequence of $M$ has exponential growth of base $a$, then the Hilbert polynomial of $R$ takes the form $H_R(t)=1+et+ft^2+gt^3$, where
\[
f=\left(a+\frac{1}{a}\right)e-\left(a^2+1+\frac{1}{a^2}\right)\quad\text{and}\quad g=e-\left(a+\frac{1}{a}\right).
\]
\end{enumerate}
\end{TheoremB}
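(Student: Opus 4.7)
The plan is to reduce the theorem to an analysis of two linear recursions satisfied by the Betti sequence. I would first replace $M$ by a sufficiently high syzygy $N = \Syz_R^n M$. Using the eventual linearity of the resolution of $M$ together with the eventual vanishing of $\Ext_R^*(M,R)$, and appealing to the preparatory material of Sections~\ref{sec:prelim} and~\ref{sec:syseq}, one can arrange that $N$ has a truly linear minimal free resolution $\FF_\bullet$ and that $\Ext_R^i(N,R)=0$ for every $i\geq 1$. The growth hypothesis on the Betti sequence transfers unchanged from $M$ to $N$ under this replacement.

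The linearity of $\FF_\bullet$ yields the Hilbert-series identity
\[
P_N^R(-t)\cdot H_R(t) \;=\; H_{\gr_{\fm}(N)}(t),
\]
whose right-hand side is a polynomial of degree at most $3$ because $\fm^4 N=0$. Writing $H_R(t)=1+et+ft^2+gt^3$ and equating coefficients of $t^i$ for $i\geq 4$ produces the \emph{forward} recursion
\[
b_i \;=\; e\,b_{i-1}-f\,b_{i-2}+g\,b_{i-3}\qquad(i\gg 0)
\]
on $b_i=\beta_i^R(N)$, whose characteristic polynomial is $p(x)=x^3-ex^2+fx-g$.

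The crux is to extract a companion \emph{dual} recursion. Dualizing $\FF_\bullet$ into $R$ and using $\Ext_R^{\geq 1}(N,R)=0$ yields an exact co-resolution $0\to N^*\to F_0^*\to F_1^*\to\cdots$. Passing to associated graded and observing that $\Hom_{\gr R}(\gr_{\fm} N,\gr_{\fm} R)$ is supported in degrees $[-3,3]$ (since both factors are, thanks to $\fm^4=0$), one computes the Euler characteristic of the graded dual complex in each sufficiently negative degree. Provided the graded dual is exact in these degrees, the vanishing of those dimensions gives the dual recursion
\[
b_i-e\,b_{i+1}+f\,b_{i+2}-g\,b_{i+3}=0\qquad(i\gg 0).
\]
Establishing the exactness of the graded dual complex---equivalently, that the linearity of $\FF_\bullet$ is inherited by its $R$-dual at the level of associated graded---is the principal obstacle. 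It is specific to the $\fm^4=0$ setting and should rest on the structural results about linear syzygies developed in Section~\ref{sec:syseq}.

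Once both recursions are in hand, the rest is elementary. Substituting $b_i=\alpha^i$ into the dual recursion yields $\alpha^i\cdot H_R(-\alpha)=0$, so the common solutions are spanned by those $\alpha^i$ for which both $\alpha$ and $1/\alpha$ are roots of $p(x)$. For part~(2), exponential growth of base $a$ forces $a$ to contribute with a nonzero coefficient, hence $1/a$ must also be a root of $p(x)$. Factoring $p(x)=(x-a)(x-1/a)(x-\gamma)$ and applying Vi\`ete's formulas yields $\gamma=g$, $e=a+1/a+\gamma$, and $f=1+(a+1/a)\gamma$; eliminating $\gamma$ gives exactly the claimed formulas. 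For part~(1), non-exceptional polynomial growth forces every effective root of $p(x)$ to lie on the unit circle; the reciprocal pairing, combined with the positivity and integrality of $e$, $f$, $g$, then forces $p(x)=(x-1)(x-r)(x-1/r)$, whence $g=1$ and $f=e$, i.e., $H_R(t)$ is palindromic.
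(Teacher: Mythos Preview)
Your overall architecture---derive two linear recursions on the Betti sequence and then constrain $H_R(t)$ from them---is exactly the paper's strategy: the matrix equation \eqref{setup:matrixeq} in Setup~\ref{setup:system} records precisely your forward recursion (top row) and your dual recursion (bottom row). However, what you flag as the ``principal obstacle,'' namely the exactness of the associated graded of the $R$-dual complex, is not the route the paper takes. Proposition~\ref{prop:linres}(2) obtains the dual recursion directly: the vanishing of $\Ext_R^{\geq n}(M,R)$ makes the ungraded dual sequence $0\to\Omega_n(M)^*\to R^{b_n}\to R^{b_{n+1}}\to\cdots$ exact, and additivity of Hilbert series then yields $P_{\Omega_n(M)}^R(1/t)\,H_{\grR}(-t)\in\ZZ[t]$, whose vanishing coefficients in negative degree are exactly the dual recursion. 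No linearity of the dual is needed, so you are making the problem harder than it is.

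Once the two recursions are in place, your treatments of (1) and (2) diverge from the paper in interesting ways. For part (2) your characteristic-polynomial argument is genuinely cleaner than the paper's: the paper (Theorem~\ref{thm:expext}) instead feeds the bounds $\beta a^i-\rho_i\le b_i\le\beta a^i+\rho_i$ into the explicit formulas of Proposition~\ref{prop:genformext} and passes to the limit. Your observation that any root of $p(x)=x^3-ex^2+fx-g$ appearing with nonzero coefficient must have its reciprocal also be a root (since the dual recursion has characteristic polynomial $x^3p(1/x)$) gives the factorization $p(x)=(x-a)(x-a^{-1})(x-g)$ and hence the formulas by Vi\`ete, with no limits required. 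This is a nice simplification.

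For part (1) your sketch is less complete than the paper's argument. The paper (Theorem~\ref{thm:polyext}) does not analyze the root structure of $p(x)$; instead it combines the balanced form $g=f-e+1$ from Lemma~\ref{lem:polyexplinres}(1) with the expression for $f$ in Proposition~\ref{prop:genformext}, and after manipulation arrives at
\[
(b_{i+2}-b_{i+1})(b_{i+1}+b_{i+2}-b_i-b_{i+3})\,e=(b_{i+3}-b_i)(b_{i+1}+b_{i+2}-b_i-b_{i+3}),
\]
so that non-exceptionality (i.e.\ $b_{i+1}+b_{i+2}-b_i-b_{i+3}\neq 0$ for some large $i$) allows cancellation and yields $e=(b_{j+3}-b_j)/(b_{j+2}-b_{j+1})$, from which $f=e$ and $g=1$ follow. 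Your assertion that ``every effective root lies on the unit circle'' is not quite right as stated (roots of modulus less than one can be effective under polynomial growth), and the passage from reciprocal pairing plus ``positivity and integrality'' to the palindromic form $p(x)=(x-1)(x-r)(x-1/r)$ requires a case analysis you have not supplied; in particular you must explain why the non-exceptional hypothesis excludes the situations where only the root $1$ is effective. Your idea can be made to work, but as written it is a gap.
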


In particular, we use the above result to prove, in Theorem \ref{thm:polyexp}, that the type of acyclic complete resolution exhibited in \cite{JoSeg05} can only be admitted by a ring with a symmetric Hilbert polynomial.  We further prove, in Theorem \ref{thm:expexp}, that certain exponential vs.\ exponential acyclic complete resolutions do not exist over $\fm^4=0$ local rings.

\section{Preliminary Concepts}\label{sec:prelim}

\numberwithin{theorem}{subsection}
\numberwithin{equation}{theorem}

Unless otherwise stated, $R$ shall represent the commutative local (Noetherian) ring $(R,\fm,k)$ having unique maximal ideal $\fm$ and residue class field $k:=R/\fm$.  Furthermore, whenever a local ring satisfies the condition $\fm^4=0$, it shall also be assumed that $\fm^3\neq 0$.

\subsection{Total reflexivity}

Although the concept of total reflexivity can be defined more generally over an arbitrary Noetherian ring, cf. \cite{AvMar02}*{Section 2}, our results are concerned with its existence over a certain class of local rings.

\begin{definition}\label{def:totreflexive}
A finitely generated $R$-module $M$ is said to be \emph{totally reflexive} if each of the following conditions hold.
\begin{enumerate}
\item The canonical map $M\to\Hom_R(\Hom_R(M,R),R)$ is an isomorphism.
\item $\Ext_R^i(M,R)=0$ for all $i>0$.
\item $\Ext_R^i(\Hom_R(M,R),R)=0$ for all $i>0$.
\end{enumerate}
\end{definition}

 {We} denote by $M^*$ the ring dual $\Hom_R(M,R)$.  Notice that condition (1) above yields the classical reflexivity condition $M\cong M^{**}$.

\begin{remark}\label{rmk:conditions}
There exist finitely generated reflexive modules which are not totally reflexive; for example, if $R=k\llbracket x_1,x_2,\ldots,x_n\rrbracket$, then $\Omega_2(k)$ fits the bill.  Still, the overall independence of conditions (1)--(3) in Definition \ref{def:totreflexive} is not completely understood over an arbitrary local ring.  In \cite{JoSeg06}, however, Jorgensen and \c{S}ega demonstrate the independence of conditions (2) and (3) for a finitely generated reflexive $R$-module $M$.
\end{remark}

\begin{definition}
Given a totally reflexive $R$-module $M$, let $\FF=(F_i,\dell_i)$ and $\GG=(G_i,d_i)$ be minimal free resolutions of $M$ and $M^*$, respectively.  Then the totally acyclic complex defined by
\[
\xymatrixrowsep{10pt}\xymatrixcolsep{7pt}\xymatrix{
\cdots\ar@{->}[rr]
&&F_2\ar@{->}[rr]^{\dell_2}
&&F_1\ar@{->}[rr]^{\dell_1}
&&F_0\ar@{->}[rr]^{\delta}\ar@{->>}[dr]_{\pi}
&&G_0^*\ar@{<-_{)}}[dl]^{\iota}\ar@{->}[rr]^{d_1^*}
&&G_1^*\ar@{->}[rr]^{d_2^*}
&&G_2^*\ar@{->}[rr]
&&\cdots\\
&&&&&&&M&&&&&&&&
}
\]
such that $\delta:=\iota\circ\pi$, is called the \emph{complete resolution} of $M$ over $R$, and is often denoted $\FF|\GG^*$.
\end{definition}

In fact, given any totally acyclic complex $\CC=(C_i,\dell_i)_{i\in\ZZ}$ over $R$, the $R$-module $\coker\dell_j$ is totally reflexive for each $j\in\ZZ$. 

\subsection{Linear resolutions}


%

Let $\CC=(C_i,\dell_i)_{i\in\ZZ}$ be a minimal complex of $R$-modules. Then the \emph{associated graded complex} of $\CC$ with respect to $\fm$ is given by
\[
\gr_{\fm}(\CC):=\bigoplus_{j\in\ZZ}\,\gr_{\fm}(\CC)^j
\]
where
\[
\gr_{\fm}(\CC)^j:\quad \cdots\longrightarrow\frac{\fm^{j-i-1}C_{i+1}}{\fm^{j-i}C_{i+1}}\xra{\,\,\delta^j_{i+1}\,\,}\frac{\fm^{j-i}C_i}{\fm^{j-i+1}C_i}\xra{\,\,\delta^j_i\,\,}\frac{\fm^{j-i+1}C_{i-1}}{\fm^{j-i+2}C_{i-1}}\longrightarrow\cdots
\]
such that, for each $ d\in\ZZ$, the map $\delta^j_ d$ is induced by the restriction of $\fm^{j- d}C_ d\to\fm^{j- d+1}C_{ d-1}$, modulo $\fm^{j- d+1}C_ d$. {We} also maintain the convention that $\fm^n=R$ for $n\leq 0$.

\begin{remark}
The associated graded complex is often referred to as being the \emph{linear part} of a minimal complex, precisely because it filters non-linear behavior from the differentials.  As one would imagine, this sort of a construction does not always preserve the exactness of a complex.
\end{remark}

\begin{definitions}
Let $M$ be a finitely generated $R$-module with minimal free resolution $\FF\onto M$.  If $\gr_{\fm}(\FF)$ is exact in positive degrees, then $\FF$ is said to be a \emph{linear resolution} of $M$.  In addition, if $M$ is totally reflexive and the minimal free resolution $\GG\onto M^*$ is linear, then $\FF|\GG^*$ is said to be a \emph{linear complete resolution} of $M$.

In general, given a minimal free resolution $\FF\onto M$ over $R$, the quantity defined by
\[
\ld_R(M):=\sup\left\{n\mid\HH_i\left(\gr_{\fm}(\FF)\right)= 0\text{ for all }i\geq n\right\}
\]
is called the \emph{linearity defect} of $M$.  If $\ld_R(M)$ is finite and positive, then $\FF$ is said to be \emph{eventually linear}. If, given a totally reflexive module $M$, the quantities $\ld_R(M)$ and $\ld_R(M^*)$ are both finite and positive, then the complete resolution of $M$ is called eventually linear.
\end{definitions}

\begin{remark}
Simply stated, a minimal complex of free $R$-modules is (eventually) linear if and only if the nonzero entries of the matrices defining each (sufficiently high) differential in the complex are elements of $\fm\setminus\fm^2$.
\end{remark}

\begin{definition}
If the minimal free resolution $\FF\onto M$ is linear, then $M$ is said to be a \emph{Koszul module}.  Notice that if, instead, $\ld_R(M)=n<\infty$, then the $n$th syzygy module of $M$, denoted $\Omega_n(M)$, is Koszul.  Furthermore, $(R,\fm,k)$ is called a \emph{Koszul ring} whenever $k$ is Koszul as an $R$-module.
\end{definition}

\subsection{Hilbert series and Poincar\'{e} series}

Let $k$ be an arbitrary field, and suppose that $V=\bigoplus_{i\geq 0}V_i$ is a graded vector space over $k$ such that $\dim_kV_n<\infty$ for each $n\in\NN$.  Then recall that the formal power series given by
\[
H_V(t):=\sum_{i\geq 0}\dim_{k}\!V_i\,t^i\in\ZZ\llbracket t\rrbracket
\]
is called the \emph{Hilbert series} of $V$.  Thus, the Hilbert series of the associated graded ring of $(R,\fm,k)$ is given by
\[
H_{\grR}(t):=\sum_{i\geq 0}\dim_k\left(\fm^i/\fm^{i+1}\right)t^i.
\]
One should note that if $\fm^n=0$ for some $n<\infty$, then the Hilbert series of $\grR$ is simply a polynomial.

Furthermore, for any finitely generated $R$-module $M$, one can also speak of the Hilbert series of the associated graded module of $M$, which is defined by
\[
H_{\grM}(t):=\sum_{i\geq 0}\dim_k\left(\fm^i M/\fm^{i+1}M\right)t^i.
\]



If $\FF\onto M$ is a minimal free resolution, then the quantity
\[
\rank F_n:=\dim_k\Tor_n^R(M,k)
\]
is called the $n$th \emph{Betti number} of $M$, and shall be denoted $b_n(M)$, or simply $b_n$ when there is no risk of confusion as to the module.  Often throughout this paper,  {we} will refer to the \emph{Betti sequence} of $M$ over $R$, which is given by $\{b_i(M)\}_{i\geq 0}$.

Furthermore, the formal power series
\[
P_M^R(t):=\sum_{i\geq 0}b_i(M)\,t^i\in\ZZ\llbracket t\rrbracket
\]
is called the \emph{Poincar\'{e} series} of $M$ over $R$.

\begin{fact}
If $(R,\fm,k)$ is Koszul, then the Poincar\'{e} series of $k$ as an $R$-module is given by
\[
P_k^R(t)=\frac{1}{H_{\grR}(-t)}
\]
\end{fact}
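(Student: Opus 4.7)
The plan is to apply the Euler characteristic argument to the minimal free resolution of $k$, after passing to its associated graded complex over $S:=\gr_\fm(R)$. First, since $(R,\fm,k)$ is Koszul, by definition the minimal free resolution $\FF\onto k$ is linear, so $\gr_\fm(\FF)$ is exact in positive (homological) degrees. Because $b_0(k)=1$, one has $F_0=R$ with augmentation the projection $R\onto k=R/\fm$; inspecting the paper's definition of $\gr_\fm(\CC)^j$, the zeroth piece of $\gr_\fm(\FF)$ in internal degree $0$ is exactly $k$, while its higher internal-degree pieces vanish after augmenting. Thus $\gr_\fm(\FF)$ constitutes a graded minimal free resolution of $k$ over $S$.

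Second, I would identify the $i$th term of this graded resolution. The paper's definition sets the $i$th piece of $\gr_\fm(\CC)^j$ to be $\fm^{j-i}F_i/\fm^{j-i+1}F_i$, so with $F_i\cong R^{b_i}$ the generators of $F_i$ sit at internal degree $i$; equivalently, $F_i$ contributes a summand isomorphic to $S(-i)^{b_i}$, whose Hilbert series is $b_i(k)\, t^i H_S(t)$. Linearity is essential precisely here: it is what forces the differential to raise internal degree by exactly one, so that the bigraded pieces align with the shape of a minimal graded free resolution over $S$.

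Third, by the standard additivity of Hilbert series along an exact sequence of graded modules applied to the resolution of $k$, one obtains
\[
1=H_k(t)=\sum_{i\geq 0}(-1)^i\, b_i(k)\, t^i\, H_S(t)=H_S(t)\cdot P_k^R(-t).
\]
Substituting $-t$ for $t$ yields $H_S(-t)\cdot P_k^R(t)=1$, which is the claimed identity $P_k^R(t)=1/H_{\gr_\fm R}(-t)$.

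The main obstacle is the bookkeeping in step two: unpacking the bigraded definition of $\gr_\fm(\FF)$ to verify that each $F_i$ really does contribute $S(-i)^{b_i}$, and in particular that linearity (rather than just exactness of $\gr_\fm(\FF)$) is what makes the internal grading behave correctly. Once this identification is in place, the remaining manipulation is purely formal power-series algebra.
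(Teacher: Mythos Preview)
The paper does not actually prove this statement; it is recorded as a \emph{Fact} without argument. Your proposed proof is the standard one and is correct: pass to $S=\gr_\fm(R)$, observe that the associated graded of the minimal free resolution of $k$ is an $S$-complex with $i$th term $S(-i)^{b_i(k)}$, use the Koszul hypothesis to get exactness in positive homological degrees, and then take the alternating sum of Hilbert series.

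One small clarification is worth making in your write-up. You say linearity is essential in step two because it ``forces the differential to raise internal degree by exactly one.'' That is not quite where linearity enters: by the very definition of $\gr_\fm(\FF)$ given in the paper, the differential \emph{always} raises internal degree by one, and the $i$th term is always $S(-i)^{b_i}$, regardless of whether the resolution is linear. What linearity (equivalently, the Koszul condition) buys you is the exactness of $\gr_\fm(\FF)$ in positive homological degrees, which is precisely what you need for the Euler-characteristic identity $\sum_i(-1)^i\,b_i\,t^i\,H_S(t)=H_k(t)=1$ to hold. Also, the verification that $\HH_0(\gr_\fm(\FF))=k$ (and not something larger in positive internal degrees) follows simply from $\dell_1(F_1)=\fm F_0$, hence $\dell_1(\fm^{j-1}F_1)=\fm^j F_0$ for all $j\geq 1$; this does not require linearity either. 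With these points straightened out, your argument is complete.
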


In the next section,  {we} develop a system of equations which will be used to prove each of our main results.

\section{The System of Equations}\label{sec:syseq}

\numberwithin{theorem}{section}
\numberwithin{equation}{theorem}

The following lemma is a result of Herzog and Iyengar which characterizes, in terms of the ring alone, the Poincar\'{e} series of a finitely generated module with an eventually linear minimal free resolution.   {We} state it without proof.

\begin{lemma}\label{lemma:linres}\cite{HeIy05}*{Proposition 1.8}
Let $M$ be a finitely generated module over a local ring $(R,\fm)$.  If the minimal free resolution of $M$ is eventually linear, then
\[
P_M^R(t)=\frac{q(t)}{H_{\grR}(-t)(1+t)^{\dim R}}
\]
for some $q\in\ZZ[t]$.
\end{lemma}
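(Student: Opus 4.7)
The plan is to first reduce to the case where $M$ is Koszul by passing to a sufficiently high syzygy, and then to compute the Poincar\'e series of a Koszul module directly from the linear resolution of its associated graded. Set $n := \ld_R(M)$, which by hypothesis is a finite positive integer, and let $N := \Omega_n(M)$, which is Koszul by definition of the linearity defect. Splitting the minimal free resolution of $M$ in homological degree $n$ gives
\[
P_M^R(t) = \sum_{i=0}^{n-1} b_i(M)\, t^i + t^n P_N^R(t),
\]
with the leading sum in $\ZZ[t]$. Hence it suffices to prove the stated formula with $M$ replaced by the Koszul module $N$.

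For the Koszul case, since $N$ is Koszul, the associated graded complex $\gr_{\fm}(\FF)$ of a minimal free resolution $\FF \onto N$ is an exact complex of graded $\grR$-modules with $\gr_{\fm}(F_i) \cong \grR(-i)^{b_i(N)}$; that is, $\gr_{\fm}(\FF)$ is a minimal \emph{linear} graded free resolution of $\gr_{\fm}(N)$ over $\grR$. In particular, the Betti numbers of $N$ over $R$ coincide with those of $\gr_{\fm}(N)$ over $\grR$, so $P_N^R(t) = P_{\gr_{\fm} N}^{\grR}(t)$. An Euler-characteristic calculation on this linear resolution then gives
\[
H_{\gr_{\fm} N}(t) = \sum_{i \geq 0} (-1)^i b_i(N)\, t^i\, H_{\grR}(t) = P_N^R(-t)\, H_{\grR}(t),
\]
so $P_N^R(t) = H_{\gr_{\fm} N}(-t)/H_{\grR}(-t)$.

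To rewrite this in the required form, I would invoke the Hilbert--Serre theorem: since $N$ is finitely generated and $\dim N \leq \dim R$, both $(1-t)^{\dim R} H_{\gr_{\fm} N}(t)$ and $(1-t)^{\dim R} H_{\grR}(t)$ are polynomials in $\ZZ[t]$. Substituting $t \mapsto -t$ shows that $H_{\grR}(-t)(1+t)^{\dim R}$ is a polynomial, and multiplying numerator and denominator of $P_N^R(t)$ by $(1+t)^{\dim R}$ puts it in the desired form. Combining with the first reduction yields the analogous form for $P_M^R(t)$, with a polynomial $q(t) \in \ZZ[t]$ in the numerator. I expect the main delicate point to be the assertion $P_N^R(t) = P_{\gr_{\fm} N}^{\grR}(t)$ for Koszul $N$: the exactness of $\gr_{\fm}(\FF)$ must be upgraded to the statement that it is a \emph{minimal} graded free resolution of $\gr_{\fm}(N)$ with the same ranks as $\FF$, which is really the content of the Koszul condition and the engine behind the entire computation.
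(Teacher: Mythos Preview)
The paper does not supply its own proof of this lemma: it is quoted verbatim from Herzog--Iyengar \cite{HeIy05}*{Proposition 1.8} and explicitly ``stated without proof,'' so there is no in-paper argument to compare against. Your proposal is the standard argument and is correct: reduce to a Koszul syzygy $N=\Omega_n(M)$, use that $\gr_\fm(\FF)$ is then a minimal linear $\grR$-free resolution of $\gr_\fm(N)$ to obtain $P_N^R(-t)\,H_{\grR}(t)=H_{\gr_\fm N}(t)$, and finish with Hilbert--Serre to clear the pole of order $\dim R$. The one point worth tightening is the identification $H_0\bigl(\gr_\fm(\FF)\bigr)\cong\gr_\fm(N)$, which you flag yourself; this is part of the Herzog--Iyengar framework and follows from the compatibility of the $\fm$-adic filtration on $N$ with that induced from $F_0$, but it deserves a sentence of justification rather than being folded into ``the Koszul condition.''
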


Note that if $\Ext_R^i(M,R)$ vanishes for all $i\gg 0$, the above lemma will characterize the Poincar\'{e} series of a sufficiently high syzygy in the variable $\frac{1}{t}$.  Furthermore assuming that $R$ is zero-dimensional, one obtains the following result.

\begin{proposition}\label{prop:linres}
Let $M$ be a finitely generated module over a zero-dimensional local ring $(R,\fm)$ such that $\Ext_R^i(M,R)=0$ for all $i\gg 0$.  If the minimal free resolution of $M$ is eventually linear, then there exists $n\in\NN$ such that the following hold.
\begin{enumerate}
\item $P_{\Omega_n(M)}^R(t)\,H_{\grR}(-t)\in\ZZ[t]$\\[-0.1in]
\item $P_{\Omega_n(M)}^R(\textstyle{\frac{1}{t}})\,H_{\grR}(-t)\in\ZZ[t]$
\end{enumerate}
\end{proposition}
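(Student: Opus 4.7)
The plan is to apply Lemma \ref{lemma:linres} in two places: directly to a high syzygy of $M$ for (1), and to an appropriate cosyzygy of the dualized resolution for (2). Choose $n \in \NN$ large enough that the minimal free resolution $\FF$ of $M$ is linear from index $n$ onward and $\Ext_R^i(M,R) = 0$ for all $i > n$; set $M' := \Omega_n(M)$. Then dimension-shift gives $\Ext_R^i(M',R) = \Ext_R^{i+n}(M,R) = 0$ for all $i \geq 1$, and $M'$ inherits a linear minimal free resolution. Statement (1) is immediate from Lemma \ref{lemma:linres} applied to $M'$: since $\dim R = 0$, the factor $(1+t)^{\dim R}$ is trivial, so $P_{M'}^R(t) H_{\grR}(-t) \in \ZZ[t]$.

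For (2), the vanishing of $\Ext_R^i(M',R)$ for $i \geq 1$ forces the dualized complex
\[
0 \to M'^* \to F_n^* \to F_{n+1}^* \to F_{n+2}^* \to \cdots
\]
to be exact, and its differentials are linear since the transpose of a matrix with entries in $\fm \setminus \fm^2$ again has such entries. For each $j \geq 1$ set $K_j := \ker(F_{n+j}^* \to F_{n+j+1}^*)$ with $K_0 := M'^*$; the short exact sequences $0 \to K_{j-1} \to F_{n+j-1}^* \to K_j \to 0$ combined with the containment $K_{j-1} \subseteq \fm F_{n+j-1}^*$ (from linearity) show that each surjection $F_{n+j-1}^* \twoheadrightarrow K_j$ is minimal. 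Iterating, for any $m \geq 1$ the module $K_m$ admits a minimal free resolution whose initial $m$ modules are $F_{n+m-1}^*, F_{n+m-2}^*, \ldots, F_n^*$ and whose tail coincides with the minimal free resolution of $M'^* = \Omega_m(K_m)$. A direct manipulation then produces the Laurent identity
\[
P_{K_m}^R(t) \;=\; t^{m-1} P_{M'}^R(1/t) \;-\; t^{-1} P_{\Omega_m(M')}^R(1/t) \;+\; t^m P_{M'^*}^R(t),
\]
in which the negative-power parts of the first two summands cancel automatically.

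For $m$ sufficiently large, apply Lemma \ref{lemma:linres} to both $K_m$ and $M'^*$ to obtain $P_{K_m}^R(t) H_{\grR}(-t) \in \ZZ[t]$ and $P_{M'^*}^R(t) H_{\grR}(-t) \in \ZZ[t]$. Multiplying the displayed identity by $H_{\grR}(-t)$ and rearranging then isolates $t^{m-1} P_{M'}^R(1/t) H_{\grR}(-t)$ as a Laurent polynomial, where the auxiliary term $t^{-1} P_{\Omega_m(M')}^R(1/t) H_{\grR}(-t)$ is controlled by applying (1) to $\Omega_m(M')$ (which is a syzygy of $M'$ satisfying the same hypotheses). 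A degree count confirms no negative powers of $t$ persist, so dividing by the unit $t^{m-1}$ in $\ZZ[t,t^{-1}]$ yields $P_{M'}^R(1/t) H_{\grR}(-t) \in \ZZ[t]$, which is (2).

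The principal obstacle is verifying the hypothesis of Lemma \ref{lemma:linres} for $K_m$ (and, along the way, for $M'^*$), namely that these modules have eventually linear minimal free resolutions. For $K_m$, the first $m$ steps are linear by construction, but eventual linearity of the tail reduces to the same property for $M'^*$. One must therefore run a symmetric version of the above analysis starting from the linear exact coresolution, exploiting that in the zero-dimensional setting the $K_j$ provide syzygies of $M'^*$ of every order contained in the appropriate power of $\fm$; this forces $M'^*$ to inherit eventual linearity on its minimal free resolution, closing the loop.
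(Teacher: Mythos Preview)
Your argument for (1) matches the paper. For (2), however, you take a fundamentally different route from the paper, and that route has a genuine gap.

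The paper does \emph{not} apply Lemma~\ref{lemma:linres} a second time. Instead, having dualized the linear resolution to obtain the exact complex
\[
0 \to \Omega_n(M)^* \to R^{b_n} \to R^{b_{n+1}} \to R^{b_{n+2}} \to \cdots,
\]
it reads off a Hilbert-series identity directly: additivity along this exact sequence (using that $R$ is zero-dimensional, so every Hilbert series involved is a polynomial) produces an equation of the form
\[
(\text{polynomial}) \;=\; H_{\grR}(t)\, P_{\Omega_n(M)}^R\!\left(-\tfrac{1}{t}\right),
\]
whence the right-hand side lies in $\ZZ[t]$, and substituting $t\mapsto -t$ gives (2). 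No information about a free resolution of $M'^*$ or of the cosyzygies $K_m$ is needed.

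Your approach, by contrast, requires applying Lemma~\ref{lemma:linres} to $K_m$ and to $M'^*$, and hence needs both to have eventually linear minimal free resolutions. This is precisely what you flag as the ``principal obstacle,'' and your closing paragraph does not overcome it. The hypotheses give $M'^*$ a linear \emph{co}resolution (the dual of $\FF$), but say nothing about its minimal \emph{free} resolution; finiteness of $\ld_R(M')$ does not imply finiteness of $\ld_R(M'^*)$. The paper itself treats these as independent conditions: in the definition of an ``eventually linear complete resolution'' it explicitly demands both $\ld_R(M)<\infty$ and $\ld_R(M^*)<\infty$. Your sketch---``run a symmetric version \ldots\ the $K_j$ provide syzygies of $M'^*$ of every order contained in the appropriate power of $\fm$''---conflates cosyzygies with syzygies and, read literally, assumes the conclusion. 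Without an independent proof that $\ld_R(M'^*)<\infty$, the appeal to Lemma~\ref{lemma:linres} for $K_m$ and $M'^*$ is unjustified, and the argument does not close.

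The fix is to abandon the second application of Lemma~\ref{lemma:linres} and argue, as the paper does, via Hilbert series on the exact dualized complex.
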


\begin{proof}
(1) follows from Lemma \ref{lemma:linres}, a proof of which can be found in \cite{HeIy05}.

In order to prove (2), let $n\in\NN$ be such that $\Ext_R^i(M,R)=0$ for $i>n$.  Then, letting $(R^{b_i},\dell_i)$ denote the minimal free resolution of $M$, one has that the sequence
\[
0\to\Omega_n(M)^*\to R^{b_n}\xra{\dell_{n+1}^*}R^{b_{n+1}}\xra{\dell_{n+2}^*}R^{b_{n+2}}\xra{\dell_{n+3}^*}R^{b_{n+3}}\to\cdots
\]
is exact.  The additivity of Hilbert series on short exact sequences now yields
\[
H_{\gr_{\fm}(\Omega_n(M))}(t)=H_{\grR}(t)P_{\Omega_n(M)}^R(-\textstyle{\frac{1}{t}}).
\]
The rest of the proof follows from that of (1).
\end{proof}

\begin{setup}\label{setup:system}
 {We} shall henceforth restrict our attention to the class of local rings $(R,\fm)$ satisfying $\fm^4=0$.  And, for the sake of simplicity,  {we} will abuse notation in the sequel by allowing
\[
H_R(t)=1+et+ft^2+gt^3
\]
to represent the Hilbert polynomial of $\grR$.  Furthermore,  {we} will often refer to this polynomial as being the Hilbert polynomial of $R$ itself.

Now suppose that $M$ is a finitely generated $R$-module which admits an eventually linear minimal free resolution and has Betti sequence $\{b_i\}_{i\geq 0}$.  If $\Ext_R^i(M,R)$ vanishes for all $i\gg 0$, then Proposition \ref{prop:linres} yields the following system of equations for each $i,j\gg 0$.
\begin{equation}\label{setup:matrixeq}
\left[\begin{array}{ccc}b_i&-b_{i+1}&b_{i+2}\\b_{j+3}&-b_{j+2}&b_{j+1}\end{array}\right]\left[\begin{array}{c}g\\f\\e\end{array}\right]=\left[\begin{array}{c}b_{i+3}\\b_j\end{array}\right]
\end{equation}
Row reduction on (\ref{setup:matrixeq}) yields the reduced system
\begin{equation}\label{setup:redmatrixeq}
\left[\begin{array}{ccc}\vspace{0.02in}b_i & -b_{i+1} & b_{i+2}\\0 & \displaystyle{\frac{\Delta_1(i,j)}{b_i}} & \displaystyle{\frac{\Delta_2(i,j)}{b_i}}\end{array}\right]
\left[\begin{array}{c}g\\f\\e\end{array}\right]=\left[\begin{array}{c}\vspace{0.02in}b_{i+3}\\\displaystyle{\frac{\Delta_3(i,j)}{b_i}}\end{array}\right]
\end{equation}
where
\begin{align}\label{setup:deltas}
\Delta_1(i,j)&=b_{i+1}b_{j+3}-b_i b_{j+2}\notag\\
\Delta_2(i,j)&=b_i b_{j+1}-b_{i+2}b_{j+3}\\
\Delta_3(i,j)&=b_i b_j-b_{i+3}b_{j+3}\notag
\end{align}
for each $i,j\gg 0$.
\end{setup}

Given the Betti sequence of such an $R$-module, one can use Setup \ref{setup:system} to determine the explicit Hilbert polynomial of $R$.  However, since \eqref{setup:matrixeq} represents infinitely many systems of equations, it might not be immediately obvious that this method is at all well-defined.  Nevertheless, Proposition \ref{prop:linres} insists that, given the Betti sequence $\{b_i\}_{i\geq 0}$ of some $R$-module, a solution to \eqref{setup:matrixeq} will be unique.

The Hilbert polynomial therefore hinges on what possible forms the Betti sequence can take on.  This question is discussed in the following section.

\section{The Betti Sequence}\label{sec:betti}

\numberwithin{theorem}{subsection}
\numberwithin{equation}{theorem}

There is very little known about the asymptotic behavior of the Betti sequence of a finitely generated module over an arbitrary local ring.  In particular, an answer to the following question of Avramov is still unknown in general.

\begin{question}\cite{Av10}*{4.3.3}
Is the Betti sequence of a finitely generated module over a local ring eventually non-decreasing?
\end{question}

\noindent A negative answer to this question would introduce the possibility of asymptotic periodicity in the Betti sequence.   {We} consider this question next.

\subsection{Periodicity}

Our main result for this section will show that Betti sequences associated with linear resolutions over an $\fm^4=0$ local ring cannot have `small' periodicity.  {We} make this statement precise in Theorem \ref{thm:periodicity} below; first, however,  {we} present the following fact.

\begin{fact}\label{fact:vanishingdeltas}
Let $M$ be a finitely generated $R$-module with Betti sequence $\{b_i\}_{i\geq 0}$.  Furthermore, for each $ d\in\{1,2,3\}$, let $\Delta_ d(i,j)$ be the quantity defined in (\ref{setup:deltas}).  Then $\Delta_ d(i,j)=0$ for all $i,j\gg 0$ if and only if there exists a positive integer $n$, which divides $d$, such that $\{b_i\}_{i\geq 0}$ is eventually periodic of period $n$.
\end{fact}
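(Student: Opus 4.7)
The plan is to handle the two implications separately, with the reverse direction being essentially a substitution and the forward direction carrying the actual content.

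For the backward direction, I would assume $\{b_i\}_{i\geq 0}$ is eventually periodic of period $n\mid d$, so that $b_{i+d}=b_i$ for all $i\gg 0$. Substituting this identity into each of the three formulas in \eqref{setup:deltas} immediately collapses the two products, yielding $\Delta_d(i,j)=0$ for all $i,j\gg 0$; for example, with $d=3$ one obtains $b_ib_j-b_{i+3}b_{j+3}=b_ib_j-b_ib_j=0$.

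For the forward direction, I would suppose $\Delta_d(i,j)=0$ for all $i,j\gg 0$ and first dispatch the degenerate case: if $b_i=0$ for some $i\gg 0$, then $M$ has finite projective dimension and the Betti sequence is eventually the zero sequence, which is trivially periodic of period $1$ (and $1\mid d$). Otherwise $b_i>0$ for all $i\gg 0$, and the equation $\Delta_d(i,j)=0$ rearranges, for $d=1,2,3$ respectively, as
\[
\frac{b_{i+1}}{b_i}=\frac{b_{j+2}}{b_{j+3}},\qquad \frac{b_{i+2}}{b_i}=\frac{b_{j+1}}{b_{j+3}},\qquad \frac{b_{i+3}}{b_i}=\frac{b_j}{b_{j+3}}.
\]
Since each left side depends only on $i$ and each right side only on $j$, both must equal a common positive rational constant $c$. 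The decisive observation is that, after reindexing the right side via $k=j+2,\,j+1,\,j$ respectively, it becomes exactly the reciprocal $b_k/b_{k+d}$ of the same forward $d$-shift ratio that appears on the left; applying the left-hand identity at the index $k$ then forces $c=1/c$. Positivity gives $c=1$, so $b_{i+d}=b_i$ for all $i\gg 0$, which is eventual periodicity of period $d$ (and $d\mid d$).

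The main point of care will be the pattern-matching rather than the algebra: I must verify in each of the three cases that the reindexed right side really is the backward $d$-shift ratio of the same eventually defined sequence, so that the identification $c=1/c$ is legitimate. Handling the zero case of finite projective dimension is needed as well but is pure bookkeeping.
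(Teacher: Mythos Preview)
Your argument is correct and ultimately rests on the same symmetry as the paper's, but the paper gets there in one line: writing $\Delta_d(i,j)=0$ uniformly as $b_i b_{j-d+3}=b_{i+d}b_{j+3}$ and then simply substituting $j=i+d-3$ gives $b_i^2=b_{i+d}^2$, whence $b_i=b_{i+d}$ by nonnegativity. Your separation-of-variables route (constant ratio $c$, then $c=1/c$ via reindexing) reproduces this, but the direct substitution avoids both the case split on $b_i=0$ and the need to track which reindexing makes the right side the reciprocal of the left. One small wording point: from $b_{i+d}=b_i$ you should conclude that the \emph{minimal} period divides $d$, not that the period equals $d$; the paper phrases it this way.
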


\begin{proof}
Fix $ d\in\{1,2,3\}$ and suppose that $b_i b_{j- d+3}=b_{i+ d}b_{j+3}$ for all $i,j\gg 0$.  Choosing $j=i+ d-3$, one has that $\{b_i\}_{i\geq 0}$ eventually satisfies $b_i^2=b_{i+ d}^2$, which implies that $b_i=b_{i+ d}$ for all $i\gg 0$.  This implies that $\{b_i\}_{i\ge 0}$ is periodic, and its period clearly divides $d$.

Conversely, fix $ d\in\{1,2,3\}$ and suppose that there exists $n\in\ZZ^+$, with $n\mid d$, such that $\{b_i\}_{i\geq 0}$ is periodic with period $n$.  Then in particular $b_i=b_{i+ d}$, and moreover $b_ib_{j- d+3}=b_{i+ d}b_{j+3}$, for all $i,j\gg 0$.  The result follows.\qedhere
\end{proof}

\begin{remarks}\label{rmk:delta}
(1) The vanishing of $\Delta_ d(i,j)$ for \emph{all} large $i$ and $j$ is essential to obtain periodicity by Fact \ref{fact:vanishingdeltas}.  Note that even the vanishing of $\Delta_ d(i,j)$ for infinitely many pairs $(i,j)$ does not necessarily imply eventual periodicity of $\{b_i\}_{i\geq 0}$.

(2) The condition that $\{b_i\}_{i\geq 0}$ is eventually non-constant is equivalent to the non-vanishing of $\Delta_1(i,j)$ for infinitely many pairs $(i,j)$.

(3) If $\{b_i\}_{i\geq 0}$ is eventually strictly increasing, then $\Delta_1(i,j)\neq 0$ for all $i,j\gg 0$.
\end{remarks}

\begin{theorem}\label{thm:periodicity}
Let $M$ be a finitely generated module with an eventually linear minimal free resolution over a local ring $(R,\fm)$ satisfying $\fm^4=0$.  If $\Ext_R^i(M,R)$ vanishes for all $i\gg 0$, then the Betti sequence of $M$ is not eventually periodic of period two or three.
\end{theorem}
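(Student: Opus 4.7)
The plan is to argue by contradiction. Suppose the Betti sequence $\{b_i\}_{i\ge 0}$ has minimal period $d\in\{2,3\}$ in its tail; I will show that in each case the tail is forced to be constant, contradicting the minimality of $d$. Throughout, the main tool is the linear recurrence
\[
g\, b_i - f\, b_{i+1} + e\, b_{i+2} = b_{i+3},
\]
valid for all $i\gg 0$, which is the first row of the system (\ref{setup:matrixeq}) and is an immediate consequence of Proposition \ref{prop:linres}(1); the second row of (\ref{setup:matrixeq}) will not be needed. I will also use freely the positivity $e,f,g\ge 1$ implied by the standing hypothesis $\fm^4=0\neq\fm^3$.

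For $d=2$, write $b_{2k}=a$ and $b_{2k+1}=b$ for all $k\gg 0$, with $a,b\ge 1$ (if $M$ has finite projective dimension the theorem is vacuous). Substituting $i=2k$ and $i=2k+1$ into the recurrence produces
\[
(e+g)\,a \,=\, (f+1)\,b \qquad\text{and}\qquad (e+g)\,b \,=\, (f+1)\,a.
\]
Viewed as a $2\times 2$ linear system in $(a,b)$, the existence of a non-trivial positive solution forces the determinant $(e+g)^2-(f+1)^2$ to vanish; since $e+g$ and $f+1$ are both positive this gives $e+g=f+1$, and substituting back yields $a=b$, contradicting minimality.

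For $d=3$, write $b_{3k}=a$, $b_{3k+1}=b$, $b_{3k+2}=c$ for $k\gg 0$, with $a,b,c\ge 1$. The recurrence at $i=3k,\,3k+1,\,3k+2$ produces the three relations
\[
(g-1)a = fb - ec, \qquad (g-1)b = fc - ea, \qquad (g-1)c = fa - eb.
\]
Summing these and using $a+b+c>0$ yields $g+e=f+1$, whereupon each individual relation simplifies; the first becomes $f(a-b)=e(a-c)$. Setting $\alpha:=f/e$ (well-defined since $e\ge 1$), the three relations rewrite as
\[
a-c = \alpha(a-b), \qquad b-a = \alpha(b-c), \qquad c-b = \alpha(c-a).
\]
Substituting the first into the second and simplifying leads to $(\alpha^2-\alpha+1)(a-b)=0$. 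Since $\alpha^2-\alpha+1 = (\alpha - \tfrac{1}{2})^2 + \tfrac{3}{4} > 0$ for every real $\alpha$, I conclude $a=b$, and then the first relation gives $a=b=c$, once again contradicting minimality.

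The substantive content is modest: one must notice that the recurrence supplied by Proposition \ref{prop:linres}(1) holds at \emph{every} sufficiently large index, so all residue classes mod $d$ yield simultaneous linear relations that can be combined; the rest is elementary linear algebra, with the strict positivity $e\ge 1$ being the crucial input that rules out the otherwise-dangerous real roots of $\alpha^2-\alpha+1$. No appeal to the dual condition in Proposition \ref{prop:linres}(2) appears to be necessary.
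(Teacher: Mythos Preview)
Your argument is correct and, in fact, establishes more than the theorem claims. The paper's proof uses the full reduced system \eqref{setup:redmatrixeq}, hence both rows of \eqref{setup:matrixeq}; the second row encodes Proposition~\ref{prop:linres}(2), which is precisely where the hypothesis $\Ext_R^i(M,R)=0$ for $i\gg 0$ enters. The paper then derives contradictions of the form $f=-1$ (period two) and $f=-e$ (period three) by computing the ratios $\Delta_3/\Delta_1$ and $\Delta_2/\Delta_1$ for suitable index pairs. Your route is different: you work entirely with the single recurrence $gb_i-fb_{i+1}+eb_{i+2}=b_{i+3}$ coming from Proposition~\ref{prop:linres}(1), which (as you observe) follows from Lemma~\ref{lemma:linres} alone and does not require any Ext vanishing. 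The elementary linear algebra you perform on the residue classes mod $d$ then forces the tail to be constant. Consequently your proof shows that the Ext hypothesis in Theorem~\ref{thm:periodicity} is superfluous: any finitely generated module with an eventually linear minimal free resolution over an $\fm^4=0$ local ring already has a Betti sequence that is not eventually periodic of period two or three.

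One small expository point: your closing remark that the positivity $e\ge 1$ ``rules out the otherwise-dangerous real roots of $\alpha^2-\alpha+1$'' is misleading, since that quadratic has discriminant $-3$ and hence no real roots for any $\alpha$. The genuine uses of positivity in your argument are (i) $e\ge 1$ so that $\alpha=f/e$ is defined, (ii) $e+g,\,f+1>0$ so that $(e+g)^2=(f+1)^2$ implies $e+g=f+1$, and (iii) $a+b+c>0$ so that the sum in the period-three case can be cancelled. None of this affects the validity of the proof.
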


\begin{proof}
Assume that $\pd_R M=\infty$, and let the Betti sequence of $M$ be denoted $\{b_i\}_{i\geq 0}$.  By Fact \ref{fact:vanishingdeltas}, it suffices to show that neither $\Delta_2(i,j)$ nor $\Delta_3(i,j)$, as defined in \eqref{setup:deltas}, can vanish for all $i,j\gg 0$.  If the sequence $\{b_i\}_{i\geq 0}$ is eventually non-constant, the set
\[
I=\{(n,m)\in\NN^2\mid\Delta_1(n,m)\neq 0\}
\]
has infinite cardinality.

First suppose that $\Delta_2(i,j)=0$ for all $i,j\gg 0$, therefore implying that $\{b_i\}_{i\geq 0}$ eventually has period two.  By \eqref{setup:redmatrixeq}, one has
\[
f=\frac{\Delta_3(n,m)}{\Delta_1(n,m)}=\frac{b_n b_m-b_{n+3}b_{m+3}}{b_{n+1}b_{m+3}-b_n b_{m+2}}=\frac{b_n b_m-b_{n+1}b_{m+1}}{b_{n+1}b_{m+1}-b_n b_m}=-1
\]
for all $(n,m)\in I$, which is absurd.

Next suppose that $\Delta_3(i,j)=0$ for all $i,j\gg 0$, implying that $\{b_i\}_{i\geq 0}$ eventually has period three.  By \eqref{setup:redmatrixeq},
\[
f=-\left(\frac{\Delta_2(n,m)}{\Delta_1(n,m)}\right)e=\left(\frac{b_{n+2}b_{m+3}-b_n b_{m+1}}{b_{n+1}b_{m+3}-b_n b_{m+2}}\right)e=\left(\frac{b_{n+2}b_m-b_n b_{m+1}}{b_{n+1}b_m-b_n b_{m+2}}\right)e
\]
for all $(n,m)\in I$.  Notice that since $I$ has infinite cardinality, it is no loss of generality to assume that the set
\[
J=\{n\in\NN\mid(n,m)\in I\text{ for some }m\in\NN\}
\]
also has infinite cardinality.  Now, for any $n\in J$, if $(n,n)\in I$ then the above equation reduces to $f=-e$, which cannot be true.  So it must follow that $(n,n)\notin I$ for any $n\in J$.  This implies that
\[
\Delta_1(n,n)=b_{n+1}b_{n+3}-b_nb_{n+2}=b_{n}(b_{n+1}-b_{n+2})=0
\]
or $b_{n+1}=b_{n+2}$ for all $n\in J$.  Because the cardinality of $I$ must be infinite and we assumed that $\{b_i\}_{i\ge 0}$ has period three, it follows that $(n,n+1)=(n,n+2)\in I$.
\begin{align*}
(n,n+1)\in I\quad&\implies\quad f=\left(\frac{b_{n+2}b_{n+1}-b_n b_{(n+1)+1}}{b_{n+1}b_{n+1}-b_n b_{(n+1)+2}}\right)e=\left(\frac{b_{n+2}}{b_{n+1}+b_n}\right)e\\
(n,n+2)\in I\quad&\implies\quad f=\left(\frac{b_{n+2}b_{n+2}-b_n b_{(n+2)+1}}{b_{n+1}b_{n+2}-b_n b_{(n+2)+2}}\right)e=\left(\frac{b_{n+2}+b_n}{b_{n+1}}\right)e
\end{align*}
These relations hold simultaneously if and only if either $b_n=0$ or $b_n+b_{n+1}+b_{n+2}=0$ for all $n\in J$.  Since both of these are impossible, we have reached a contradiction.
\end{proof}

\subsection{Growth rates}

Among the mystery surrounding the Betti sequence of a finitely generated module over an arbitrary local ring is the following open question of Avramov.

\begin{question}\cite{Av10}*{4.3.7}
Does there exist a finitely generated module over a local ring whose Betti sequence grows subexponentially but superpolynomially?
\end{question}

It is known that the Betti sequence of a module over a local ring cannot grow superexponentially by work of Serre \cite{Ser56II}.

\begin{definition}\label{def:polynomial}
The Betti sequence $\{b_i\}_{i\geq 0}$ of a finitely generated $R$-module $M$ is said to have \emph{polynomial growth} if there exists $n\in\NN$ such that, for all $i\gg 0$,
\[
\alpha i^n-\lambda_i\leq b_i\leq\alpha i^n+\lambda_i
\]
for some $\alpha\in\RR^+$ and some sequence $\{\lambda_i\}_{i\geq 0}$ of real numbers satisfying $\lambda_i/i^n\to 0$.
\end{definition}

\begin{definition}\label{def:exponential}
Let $1<a\in\RR$.  The Betti sequence $\{b_i\}_{i\geq 0}$ of a finitely generated $R$-module $M$ is said to have \emph{exponential growth} (\emph{of base $a$}) if, for all $i\gg 0$,
\[
\beta a^i-\rho_i\leq b_i\leq\beta a^i+\rho_i
\]
for some $\beta\in\RR^+$ and some sequence $\{\rho_i\}_{i\geq 0}$ of real numbers satisfying $\rho_i/a^i\to 0$.
\end{definition}

\begin{remarks}\label{rmk:growthrates}
(1) The literature often refers to such growth rates in the language of complexity and curvature; cf.\ \cite{Av10}*{4.2}.  Whereas these quantities specify a smallest upper bound for the asymptotic behavior of certain Betti sequences, our definitions above provide a largest lower bound as well.

(2) Both of the above growth rates have been extensively studied.  It is well-known that over a complete intersection ring, every finitely generated module has a Betti sequence which grows polynomially \cite{Gul74}.  Furthermore, exponential growth of Betti numbers has been demonstrated in a variety of settings, including over Golod rings \cite{Sun98}, Cohen-Macaulay rings of small multiplicity \cites{GaPe90,Pe98}, and certain $\fm^3=0$ local rings \cite{Les85}.
\end{remarks}

%
%
%

Finally,  {we} define a special type of polynomial growth which will be of importance to our results in the next section.

\begin{definition}
The Betti sequence $\{b_i\}_{i\geq 0}$ of a finitely generated $R$-module $M$ is said to be \emph{exceptional} if
\[
b_{i+1}-b_{i}=b_{i+3}-b_{i+2}
\]
for all $i\gg 0$.
\end{definition}

\begin{remark}
Any Betti sequence which is either constant or exactly linear (that is, $b_{i+1}=b_i+\alpha$ for some $\alpha\in\NN$) is exceptional.  However, a Betti sequence $\{b_i\}_{i\geq 0}$ such that
\[
b_{i+1}-b_{i}=b_{i+3}-b_{i+2}\neq b_{i+2}-b_{i+1}
\]
for all $i\gg 0$ is also exceptional.  Though such growth of Betti numbers may seem pathological, it is known to occur over certain codimension two complete intersections by work of Avramov and Buchweitz \cite{AvBu00}.
\end{remark}

\section{The Hilbert Polynomial}\label{sec:hilbert}

\numberwithin{theorem}{section}
\numberwithin{equation}{theorem}

The ultimate goal of this section is to investigate necessary conditions which must be placed on the Hilbert polynomial of an $\fm^4=0$ local ring $R$ in order for the ring to admit a finitely generated module $M$ having an eventually linear resolution which is \emph{partially} complete --- in other words, satisfying $\Ext_R^i(M,R)=0=\Ext_R^i(M^*,R)$ for all $i\gg 0$.  However, there is much to be said about the Hilbert polynomial of such a ring in the more general setting.  That is, before considering the `partially complete' condition,  {we} first study the existence of $R$-modules $M$, with eventually linear resolutions, satisfying the vanishing of $\Ext_R^i(M,R)$ for $i\gg 0$.  The following section investigates an even more general scenario:  {we} don't require the vanishing of $\Ext_R^i(M,R)$ for any $i$.

\subsection{The general form}

\numberwithin{theorem}{subsection}
\numberwithin{equation}{theorem}

The recursion relation suggested by Proposition \ref{prop:linres}(1) gives one the ability to express the general form for the Hilbert polynomial of an $\fm^4=0$ local ring $R$ which admits a finitely generated module $M$ with an eventually linear minimal free resolution as
\begin{equation}\label{eq:general}
H_R(t)=1+et+ft^2+\left(\frac{b_{i+1}}{b_i}f-\frac{b_{i+2}}{b_i}e+\frac{b_{i+3}}{b_i}\right)t^3
\end{equation}
for any $i\gg 0$, where $\{b_i\}_{i\geq 0}$ denotes the Betti sequence of $M$.  If one furthermore assumes that the Betti sequence of $M$ has either polynomial or exponential growth, the following result is obtained.

\begin{lemma}\label{lem:polyexplinres}
Let $M$ be a finitely generated module with an eventually linear minimal free resolution over a local ring $(R,\fm)$ satisfying $\fm^4=0$.
\begin{enumerate}
\item If the Betti sequence of $M$ has polynomial growth, then
\[
H_R(t)=1+et+ft^2+(f-e+1)t^3.
\]
\item If the Betti sequence of $M$ has exponential growth of base $a$, then
\[
H_R(t)=1+et+ft^2+(af-a^2e+a^3)t^3.
\]
\end{enumerate}
\end{lemma}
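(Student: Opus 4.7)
The plan is to exploit the fact that equation (\ref{eq:general}) provides a single expression
\[
g=\frac{b_{i+1}}{b_i}f-\frac{b_{i+2}}{b_i}e+\frac{b_{i+3}}{b_i}
\]
for the (fixed) coefficient $g$ of $t^3$ in $H_R(t)$, valid for every sufficiently large $i$. Since the left-hand side does not depend on $i$, I can evaluate the limit of the right-hand side as $i\to\infty$; the hypothesized growth of $\{b_i\}_{i\ge 0}$ controls the ratios $b_{i+k}/b_i$.

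For part (1), by Definition \ref{def:polynomial} one has $b_i=\alpha i^n+\lambda_i$ with $\alpha>0$, $\lambda_i/i^n\to 0$, and hence $b_i>0$ for all $i\gg 0$. A direct computation gives
\[
\frac{b_{i+k}}{b_i}=\frac{\alpha(i+k)^n+\lambda_{i+k}}{\alpha i^n+\lambda_i}\longrightarrow 1\qquad\text{as }i\to\infty,
\]
for each fixed $k\in\{1,2,3\}$. Substituting into (\ref{eq:general}) immediately yields $g=f-e+1$, which is the claim.

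For part (2), by Definition \ref{def:exponential} one has $b_i=\beta a^i+\rho_i$ with $a>1$, $\beta>0$, and $\rho_i/a^i\to 0$; again $b_i>0$ for $i\gg 0$. A similar computation gives
\[
\frac{b_{i+k}}{b_i}=a^k\cdot\frac{1+\rho_{i+k}/(\beta a^{i+k})}{1+\rho_i/(\beta a^i)}\longrightarrow a^k,
\]
for each fixed $k\in\{1,2,3\}$. Substituting into (\ref{eq:general}) then gives $g=af-a^2e+a^3$, as claimed.

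The argument is quite short, so there is no serious obstacle. The only subtlety is ensuring that one is justified in passing to the limit: this is legitimate precisely because (\ref{eq:general}) asserts that the right-hand expression equals the constant $g$ identically for all sufficiently large $i$, so its limit as $i\to\infty$ necessarily also equals $g$. The positivity of the $b_i$ for large $i$ (needed to form the ratios) is built into the definitions of polynomial and exponential growth via the conditions $\alpha,\beta\in\RR^+$.
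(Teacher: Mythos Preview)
Your proof is correct and follows essentially the same approach as the paper: both start from equation~(\ref{eq:general}) expressing $g$ in terms of the ratios $b_{i+k}/b_i$ and then use the growth hypotheses to determine the limiting value. The only cosmetic difference is that the paper works with the two-sided inequalities from Definitions~\ref{def:polynomial} and~\ref{def:exponential} and squeezes $g$ between bounds that both tend to the desired value, whereas you (equivalently) rewrite the growth condition as $b_i=\alpha i^n+\lambda_i$ with $\lambda_i/i^n\to 0$ (respectively $b_i=\beta a^i+\rho_i$ with $\rho_i/a^i\to 0$) and compute the limit of each ratio directly.
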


\begin{proof}
Let the Betti sequence of $M$ be denoted by $\{b_i\}_{i\geq 0}$.  First  {we} prove (1).  By the hypothesis, there exists $n\in\NN$ such that, for all $i\gg 0$,
\[
\alpha i^n-\lambda_i\leq b_i\leq\alpha i^n+\lambda_i
\]
for some $\alpha\in\RR^+$ and some sequence $\{\lambda_i\}_{i\geq 0}$ satisfying $\lambda_i/i^n\to 0$.  Given these quantities, one obtains the following bound.
\begin{align*}
g&=\frac{b_{i+1}e-b_{i+2}f+b_{i+3}}{b_i}\\[0.25\baselineskip]
&\leq\frac{\left(\alpha(i+1)^n+\lambda_{i+1}\right)f-\left(\alpha(i+2)^n-\lambda_{i+2}\right)e+\left(\alpha(i+3)^n+\lambda_{i+3}\right)}{\alpha i^n-\lambda_i}
\end{align*}

Notice that the quantity on the right-hand side can be made arbitrarily close to $f-e+1$ as $i\to\infty$, and one can similarly show that $g$ is bounded from below by a quantity that asymptotically approaches $f-e+1$.  Hence $H_R(t)=1+et+ft^2+(f-e+1)t^3$, which is what was to be proved.

To show (2), let $1<a\in\RR$ be such that, for all $i\gg 0$,
\[
\beta a^i-\rho_i\leq b_i\leq\beta a^i+\rho_i
\]
for some $\beta\in\RR^+$ and some sequence $\{\rho_i\}_{i\geq 0}$ satisfying $\rho_i/a^i\to 0$.  As in the proof of (1),  {we} proceed to bound $g$.
\begin{align*}
g&=\frac{b_{i+1}e-b_{i+2}f+b_{i+3}}{b_i}\\[0.25\baselineskip]
&\leq\frac{\left(\beta a^{i+1}+\rho_{i+1}\right)f-\left(\beta a^{i+2}-\rho_{i+2}\right)e+\left(\beta a^{i+3}+\rho_{i+3}\right)}{\beta a^i-\rho_i}
\end{align*}

Therefore, $g$ is bounded from above by a quantity that can be made arbitrarily close to $af-a^2e+a^3$ as $i\to\infty$.  The same can be shown for a lower bound of $g$.  Thus, $H_R(t)=1+et+ft^2+(af-a^2e+a^3)t^3$, as claimed.
\end{proof}

 {We} illustrate the application of the characterizations provided by Lemma \ref{lem:polyexplinres} in the following example.

\begin{example}\label{ex:expnovanishext}
Let $R=k\llbracket w,x,y,z\rrbracket/(w^2,wx,x^2,y^2,z^2)$ and consider the $R$-module $M=R/(w,x)$.  One can use an inductive argument to show that the $n$th map in the minimal free resolution of $M$ over $R$ is represented by the block diagonal matrix
\[
\left[\begin{array}{ccccccc}w & x & & & & & \\ &  & w & x & & \\& & & & \ddots\\ & & & & & w & x\end{array}\right]_{n\times 2n}
\]
with respect to the standard bases of $R^n$ and $R^{2n}$, respectively.  Thus, $M$ has a linear minimal free resolution and its Betti sequence has exponential growth of base two.  One can now use Lemma \ref{lem:polyexplinres}(2) to recover the last coefficient of $H_R(t)$.
\begin{align*}
H_R(t)&=1+4t+5t^2+2t^3\\
&=1+4t+5t^2+(2\cdot 5-2^2\cdot 4+2^3)t^3
\end{align*}
\end{example}

Notice that the statement of Lemma \ref{lem:polyexplinres} does not assume anything about the vanishing of $\Ext_R^i(M,R)$ for $i\gg 0$.  In the next section, {we} shall investigate the additional restrictions on $H_R(t)$ which arise if one makes this assumption.

\subsection{(Eventual) vanishing of Ext}

The results in this section rely on the system in \eqref{setup:redmatrixeq} having full rank.  Since this system reduces to a single equation whenever the Betti sequence is eventually constant,  {we} shall restrict our attention to Betti sequences which are eventually non-constant.

The following proposition specifies conditions under which the Hilbert polynomial of a local ring $R$ can be expressed in terms of the embedding dimension of $R$ and just four Betti numbers of a suitable $R$-module.  This result provides a foundation for the results in the remainder of this manuscript.

\begin{proposition}\label{prop:genformext}
Let $M$ be a finitely generated module with an eventually linear minimal free resolution over a local ring $(R,\fm)$ satisfying $\fm^4=0$. Suppose that the Betti sequence of $M$ is not eventually constant and that $\Ext_R^i(M,R)=0$ for all $i>m\geq 0$.  If there exists $n\ge\max\{\ld_R(M),m\}$ such that $\Delta_1(n,n)\neq 0$, then $H_R(t)=1+et+ft^2+gt^3$, where
\begin{align*}
f&=\frac{(b_{n+2}b_{n+3}-b_nb_{n+1})e-(b_{n+3}^2-b_n^2)}{b_{n+1}b_{n+3}-b_nb_{n+2}}\\[0.25\baselineskip]
g&=\frac{(b_{n+2}^2-b_{n+1}^2)e-(b_{n+2}b_{n+3}-b_nb_{n+1})}{b_{n+1}b_{n+3}-b_nb_{n+2}}
\end{align*}
given the Betti sequence $\{b_i\}_{i\geq 0}$ of $M$.
\end{proposition}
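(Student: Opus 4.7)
The strategy is to apply the reduced system \eqref{setup:redmatrixeq} at the single index pair $(i,j) = (n,n)$, obtaining two scalar equations in the three unknowns $e, f, g$. The hypotheses will force these two equations to be independent, so they determine $f$ and $g$ uniquely as affine functions of $e$.

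First I would verify that \eqref{setup:redmatrixeq} is valid at $(n,n)$. Since $n \geq \ld_R(M)$, the syzygy $\Omega_n(M)$ is Koszul, so Lemma \ref{lemma:linres} applies to it and yields the first conclusion of Proposition \ref{prop:linres}. Since $n \geq m$, the sequence $0 \to \Omega_n(M)^* \to R^{b_n} \to R^{b_{n+1}} \to \cdots$ from the proof of Proposition \ref{prop:linres}(2) is exact, so the second conclusion applies as well. These two polynomiality identities are precisely what yields \eqref{setup:matrixeq} at $(n,n)$, and hence its row-reduced form \eqref{setup:redmatrixeq}. I would also record that $b_n \neq 0$: since the Betti sequence is not eventually constant, $\pd_R M = \infty$, so by Nakayama no Betti number vanishes.

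Second, I would read off $f$ from the second row of \eqref{setup:redmatrixeq} at $(n,n)$, which after clearing the $1/b_n$ factor reads
\[
\Delta_1(n,n)\, f + \Delta_2(n,n)\, e = \Delta_3(n,n).
\]
Because $\Delta_1(n,n) \neq 0$ by hypothesis, this solves uniquely for $f$, and substituting the definitions \eqref{setup:deltas} at $(n,n)$ gives the stated formula directly (after rewriting $\Delta_3(n,n) = b_n^2 - b_{n+3}^2$ as $-(b_{n+3}^2 - b_n^2)$).

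Finally, I would back-substitute into the first equation $b_n g - b_{n+1} f + b_{n+2} e = b_{n+3}$ and solve for $g$. Placing everything over the common denominator $\Delta_1(n,n)$ expresses $b_n \Delta_1(n,n)\, g$ as a polynomial in the $b_{n+k}$'s and $e$. The main (though routine) algebraic step is verifying that this polynomial factors through $b_n$: the $b_{n+1} b_{n+3}^2$ cross-terms cancel, the surviving constant piece collapses to $b_n(b_n b_{n+1} - b_{n+2} b_{n+3})$, and the coefficient of $e$ collapses to $b_n(b_{n+2}^2 - b_{n+1}^2)$. Cancelling the common factor of $b_n$ yields the stated formula for $g$.
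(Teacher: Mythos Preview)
Your proposal is correct and follows essentially the same route as the paper: solve the second row of \eqref{setup:redmatrixeq} at $(i,j)=(n,n)$ for $f$, then back-substitute into the first row to obtain $g$. You are simply more explicit than the paper about why the system is valid at the specific index $n$ (via $n\ge\ld_R(M)$ and $n\ge m$) and why $b_n\neq 0$, whereas the paper's proof just invokes \eqref{setup:redmatrixeq} and says the formulas ``imply the result upon simplification.''
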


\begin{proof}
It suffices to solve the system of equations in \eqref{setup:redmatrixeq}.  To this end,
\[
f=-\left(\frac{\Delta_2(n,n)}{\Delta_1(n,n)}\right)e+\frac{\Delta_3(n,n)}{\Delta_1(n,n)}=\frac{(b_{n+2}b_{n+3}-b_nb_{n+1})e-(b_{n+3}b_{n+3}-b_nb_n)}{b_{n+1}b_{n+3}-b_nb_{n+2}}
\]
and
\[
g=\left(\frac{b_{n+1}}{b_n}\right)f-\left(\frac{b_{n+2}}{b_n}\right)e+\frac{b_{n+3}}{b_n}.
\]
which imply the result upon simplification.
\end{proof}

\begin{example}
Let $R=k\llbracket w,x,y,z\rrbracket/(w^2,wx,x^2,y^2,z^2)$ be as in Example \ref{ex:expnovanishext}, but consider the $R$-module $M=R/(x)$.  Since $\Ext_R^i(M,R)$ does not eventually vanish, one would not expect Proposition \ref{prop:genformext} to recover the last two coefficients of $H_R(t)$.  Indeed,
\begin{align*}
f&=5\neq\frac{19}{4}=\frac{(b_{3}b_{4}-b_1b_{2})e-(b_{4}^2-b_1^2)}{b_{2}b_{4}-b_1b_{3}}\\[0.25\baselineskip]
g&=2\neq\frac{3}{2}=\frac{(b_{3}^2-b_{2}^2)e-(b_{3}b_{4}-b_1b_{2})}{b_{2}b_{4}-b_1b_{3}}.
\end{align*}
\end{example}

\begin{remark}
Indeed, it is possible to cook up a non-constant sequence $\{b_i\}_{i\geq 0}$ such that $\Delta_1(n,n)=b_nb_{n+2}-b_{n+1}b_{n+3}$ vanishes for all $n\geq 0$; in particular, such a sequence would be periodic of period four.  Although Proposition \ref{prop:genformext} could not be used explicitly in the presence of such a Betti sequence, one could derive a similar result by exploiting the non-vanishing of $\Delta_1(m,n)$ for some pair $(m,n)$.  Recall that the existence of such a pair $(m,n)$ is guaranteed by Remark \ref{rmk:delta}(2).  The following lemma demonstrates that $\Delta_1(n,n)$ vanishes for all $n\gg 0$ whenever the Betti sequence grows `fast enough.'
\end{remark}

\begin{lemma}\label{lem:delta1nonzero}
If the sequence $\{b_i\}_{i\geq 0}$ has either non-constant polynomial or exponential growth, then $\Delta_1(i,i)=b_ib_{i+2}-b_{i+1}b_{i+3}\neq 0$ for all $i\gg 0$.
\end{lemma}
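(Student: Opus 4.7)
My plan is to prove $\Delta_1(i,i)\neq 0$ for $i\gg 0$ by a direct asymptotic computation: in each of the two growth regimes I isolate an explicitly nonzero principal term in the expansion of $b_ib_{i+2}-b_{i+1}b_{i+3}$ and show that the error arising from lower-order fluctuations is asymptotically negligible.

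For exponential growth of base $a>1$, I would substitute $b_j=\beta a^j+\eta_j$ with $|\eta_j|/a^j\to 0$ into each of the four Betti numbers and expand. The pure-exponential contribution is
\[
\beta^2\bigl(a^i a^{i+2}-a^{i+1}a^{i+3}\bigr)=\beta^2 a^{2i+2}(1-a^2),
\]
which is nonzero because $a>1$. Every remaining term in the expansion involves at least one factor of $\eta_j$ together with factors of size at most a constant times $a^j$; after dividing each such term by $a^{2i+2}$, the hypothesis $\eta_j/a^j\to 0$ forces the quotient to tend to zero. Hence the principal term dominates in absolute value for $i\gg 0$, and $\Delta_1(i,i)$ has the same (nonzero) sign as it does from some index onward.

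For non-constant polynomial growth of degree $n\geq 1$, I would write $b_j=\alpha j^n+\lambda_j$ with $\lambda_j/j^n\to 0$ and again expand. The pure-polynomial principal part
\[
\alpha^2\bigl[i^n(i+2)^n-(i+1)^n(i+3)^n\bigr]
\]
has leading behavior $-2n\alpha^2 i^{2n-1}+O(i^{2n-2})$, which is nonzero precisely because $n\geq 1$. The main obstacle I expect is controlling the error contributions from the $\lambda_j$ at this order: the bare hypothesis $\lambda_j=o(j^n)$ only forces errors of size $o(i^{2n})$, which need not be smaller than the $O(i^{2n-1})$ principal term. I would address this either by sharpening the asymptotic using integrality of $b_j$ (which restricts how wildly $\lambda_j$ can fluctuate relative to $\alpha j^n$), or else by an indirect route: if $\Delta_1(i,i)=0$ on a cofinite set of $i$, then $b_{i+1}b_{i+3}=b_ib_{i+2}$ forces the consecutive ratios $r_i:=b_{i+1}/b_i$ to obey $r_{i+2}=1/r_i$; telescoping through one period yields $b_{i+4}=b_i$ eventually, producing a bounded, $4$-periodic Betti sequence that directly contradicts the unbounded polynomial- or exponential-growth hypothesis. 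Promoting this alternative argument from ``cofinitely many $i$'' to ``for all $i\gg 0$'', via a subsequence analysis, is the step I expect to require the most care.
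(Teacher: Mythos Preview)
Your exponential argument is correct and is the same computation as the paper's, phrased differently: the paper argues via the ratio $b_ib_{i+2}/(b_{i+1}b_{i+3})\to 1/a^2<1$, which is exactly your principal-term estimate in quotient form.

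For the polynomial case you have correctly located a real gap, and neither of your proposed repairs can close it at the stated level of generality. In fact the lemma is \emph{false} for arbitrary positive-integer sequences with non-constant polynomial growth: set $b_{2k}=b_{2k+1}=2k+2$, so that $b_i\sim i$; then for every even $i$ one has $b_ib_{i+2}=b_{i+1}b_{i+3}$, and hence $\Delta_1(i,i)=0$ infinitely often. This rules out the integrality idea, and your $4$-periodicity argument, as you anticipated, only excludes $\Delta_1(i,i)=0$ on a cofinite set of $i$. The paper's own proof is erroneous here as well: it asserts that $b_i/b_{i+1}\to 0$ under polynomial growth, whereas this ratio tends to $1$, so the conclusion $b_ib_{i+2}<b_{i+1}b_{i+3}$ does not follow. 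What actually salvages the downstream uses in Theorems~\ref{thm:polyext} and~\ref{thm:expext} is the extra structure present there: by Setup~\ref{setup:system} the Betti numbers satisfy the linear recurrence $b_{i+3}=eb_{i+2}-fb_{i+1}+gb_i$ for $i\gg 0$, and one should argue from that recurrence (or note that those proofs really only need $\Delta_1(i,i)\neq 0$ for infinitely many $i$, plus some bookkeeping) rather than from the growth hypothesis alone.
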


\begin{proof}
Suppose that $\{b_i\}_{i\geq 0}$ has polynomial growth.  According to Definition \ref{def:polynomial}, there exists $n\in\NN$ such that, for all $i\gg 0$,
\[
\alpha i^n-\lambda_i\leq b_i\leq\alpha i^n+\lambda_i
\]
for some $\alpha\in\RR^+$ and some sequence $\{\lambda_i\}_{i\geq 0}$ satisfying $\lambda_i/i^n\to 0$.  This implies the following bound.
\[
\frac{b_i}{b_{i+1}}\leq\frac{\alpha i^n+\lambda_i}{\alpha(i+1)^n-\lambda_{i+1}}
\]
One can now see that $\ds\frac{b_i}{b_{i+1}}\to 0$, and therefore $\ds\frac{b_ib_{i+2}}{b_{i+1}b_{i+3}}\to 0$ as well.  Thus, $b_ib_{i+2}<b_{i+1}b_{i+3}$ for all $i\gg 0$.

A similar proof shows the same result whenever $\{b_i\}_{i\geq 0}$ has exponential growth.
\end{proof}

 {We} now consider the characterization in Proposition \ref{prop:genformext} given prescribed behavior in the Betti sequence.   {We} begin by showing that a symmetric Hilbert polynomial is necessary for non-exceptional polynomially growing Betti numbers.

\begin{theorem}\label{thm:polyext}
Let $M$ be a finitely generated module with an eventually linear minimal free resolution over a local ring $(R,\fm)$ satisfying $\fm^4=0$, and suppose that the Betti sequence of $M$ has non-exceptional polynomial growth.  If $\Ext_R^i(M,R)$ vanishes for all $i\gg 0$, then
\[
H_R(t)=1+et+et^2+t^3.
\]
\end{theorem}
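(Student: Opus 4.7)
Proof proposal:

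The plan is to combine the two characterizations of the Hilbert coefficients available to us: the one coming from polynomial growth alone (Lemma \ref{lem:polyexplinres}(1)) and the two recursions coming from eventual vanishing of $\Ext_R^*(M,R)$ (Proposition \ref{prop:linres}). Each will constrain the triple $(e,f,g)$, and the non-exceptionality hypothesis will force them to agree only when $e=f$.

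First I would invoke Lemma \ref{lem:polyexplinres}(1) to write $H_R(t)=1+et+ft^2+(f-e+1)t^3$, so that it suffices to show $f=e$ (as this immediately yields $g=1$). Next, since $\Ext_R^i(M,R)$ vanishes for $i\gg 0$, Proposition \ref{prop:linres}(1) and (2) (equivalently, the two rows of \eqref{setup:matrixeq}) give the two eventual recursions
\begin{align*}
b_{i+3}&=eb_{i+2}-fb_{i+1}+gb_i,\\
b_i&=eb_{i+1}-fb_{i+2}+gb_{i+3},
\end{align*}
valid for all $i\gg 0$. Substituting $g=f-e+1$ and collecting terms, each of these can be rewritten as an expression for $b_{i+3}-b_i$:
\begin{align*}
b_{i+3}-b_i&=e(b_{i+2}-b_i)-f(b_{i+1}-b_i),\\
b_{i+3}-b_i&=e(b_{i+3}-b_{i+1})-f(b_{i+3}-b_{i+2}).
\end{align*}
Equating the two right-hand sides, the $e$-terms and $f$-terms combine to yield, after a short rearrangement,
\[
(e-f)\bigl[(b_{i+1}-b_i)-(b_{i+3}-b_{i+2})\bigr]=0
\]
for every sufficiently large $i$.

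Finally, I would invoke the non-exceptionality hypothesis: if $e\ne f$, then the bracketed quantity must vanish for all $i\gg 0$, which is precisely the definition of the Betti sequence being exceptional, contradicting the hypothesis. Hence $f=e$, and consequently $g=f-e+1=1$, giving $H_R(t)=1+et+et^2+t^3$. I do not anticipate a serious obstacle here; the only place for care is the bookkeeping of indices and signs when combining the two recursions, since they come from expanding $P_{\Omega_n(M)}^R(t)H_{\gr R}(-t)$ and $P_{\Omega_n(M)}^R(1/t)H_{\gr R}(-t)$ respectively, so the orientations are opposite. Note also that the argument never needs the $\Delta_1(n,n)\ne 0$ hypothesis of Proposition \ref{prop:genformext}; the recursions themselves suffice, which is convenient since only non-exceptionality (not full non-constancy of a particular form) is assumed.
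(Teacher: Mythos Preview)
Your proof is correct, and it is cleaner than the paper's own argument. Both proofs start identically by invoking Lemma~\ref{lem:polyexplinres}(1) to reduce to showing $f=e$, but from there they diverge. The paper solves the first recursion for $f$, then invokes Proposition~\ref{prop:genformext} (which in turn requires Lemma~\ref{lem:delta1nonzero} to guarantee $\Delta_1(n,n)\neq 0$) to obtain a second expression for $f$, equates them, and after a page of algebra arrives at the factored relation
\[
(b_{i+2}-b_{i+1})\bigl(b_{i+1}+b_{i+2}-b_i-b_{i+3}\bigr)e=(b_{i+3}-b_i)\bigl(b_{i+1}+b_{i+2}-b_i-b_{i+3}\bigr),
\]
uses non-exceptionality to cancel the common factor, solves for $e$, and substitutes back to conclude $f=e$. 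You instead substitute $g=f-e+1$ directly into the two raw recursions of \eqref{setup:matrixeq}, subtract, and land immediately on $(e-f)\bigl[(b_{i+1}-b_i)-(b_{i+3}-b_{i+2})\bigr]=0$. Your observation that the two bracketed quantities coincide (since $(b_{i+2}-b_i)-(b_{i+3}-b_{i+1})=(b_{i+1}-b_i)-(b_{i+3}-b_{i+2})$) is the key simplification the paper's longer computation eventually reproduces. As you note, your route never touches $\Delta_1$ or Proposition~\ref{prop:genformext}, so it avoids the auxiliary Lemma~\ref{lem:delta1nonzero} entirely; the paper's approach buys nothing extra here, though its intermediate formula $e=(b_{j+3}-b_j)/(b_{j+2}-b_{j+1})$ is mildly interesting in its own right.
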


\begin{proof}
By virtue of Lemma \ref{lem:polyexplinres}(1), the Hilbert polynomial of $R$ must take the form
\[
H_R(t)=1+et+ft^2+(f-e+1)t^3.
\]
 {We} will use this fact, along with the statement of Proposition \ref{prop:genformext}, to show that $f=e$, implying the result.

Let $\{b_i\}_{i\geq 0}$ denote the Betti sequence of $M$.  The general form of $H_R(t)$ given in \eqref{eq:general} implies that
\[
g=\left(\frac{b_{i+1}}{b_i}\right)f-\left(\frac{b_{i+2}}{b_i}\right)e+\frac{b_{i+3}}{b_i}=f-e+1
\]
for $i\gg 0$.  Solving for $f$ now yields
\[
f=\left(\frac{b_{i+2}-b_i}{b_{i+1}-b_i}\right)e-\frac{b_{i+3}-b_i}{b_{i+1}-b_i}
\]
for $i\gg 0$.  Since, by Lemma \ref{lem:delta1nonzero}, $\Delta_1(i,i)$ is non-vanishing for all $i\gg 0$, one can now equate the expression for $f$ above with that from Proposition \ref{prop:genformext} to obtain the equality
\[
\frac{(b_{i+2}b_{i+3}-b_ib_{i+1})e-(b_{i+3}^2-b_i^2)}{b_{i+1}b_{i+3}-b_ib_{i+2}}=\frac{(b_{i+2}-b_i)e-(b_{i+3}-b_i)}{b_{i+1}-b_i}
\]
for all $i\gg 0$.  This now implies the equation
\begin{align*}
(b_ib_{i+2}-b_{i+2}^2-b_{i+1}&b_{i+3})e-(b_ib_{i+2}-b_{i+2}b_{i+3}-b_{i+1}b_{i+3})\\
&=(b_ib_{i+1}-b_{i+1}^2-b_{i+2}b_{i+3})e-(b_i^2-b_ib_{i+1}-b_{i+3}^2)
\end{align*}
and therefore
\[
(b_{i+2}-b_{i+1})(b_{i+1}+b_{i+2}-b_i-b_{i+3})e=(b_{i+3}-b_i)(b_{i+1}+b_{i+2}-b_i-b_{i+3})
\]
for all $i\gg 0$. Now, since  {we} have assumed that $\{b_i\}_{i\geq 0}$ is not exceptional, it follows that that $b_{i+1}+b_{i+2}-b_i-b_{i+3}$ does not vanish infinitely often.  Thus, there exists some $j\gg 0$ such that
\[
e=\frac{b_{j+3}-b_j}{b_{j+2}-b_{j+1}}.
\]
Substituting this value of $e$ into the the expression for $f$ which is obtained by solving the second equation of \eqref{setup:redmatrixeq}, one has the following upon simplification.
\begin{align*}
f&=\left(\frac{b_{j+2}-b_j}{b_{j+1}-b_j}\right)\left(\frac{b_{j+3}-b_j}{b_{j+2}-b_{j+1}}\right)-\frac{b_{j+3}-b_1}{b_{j+1}-b_j}\\[0.1in]
&=\frac{b_{j+3}-b_j}{b_{j+2}-b_{j+1}}\\[0.05in]
&=e
\end{align*}
This, of course, implies the result.
\end{proof}

The following two examples illustrate Theorem \ref{thm:polyext}.

\begin{example}\label{ex:notexc}
Let $R=k\llbracket x,y,z\rrbracket/(x^2,y^2,z^2)$.  Then $k$ is a totally reflexive $R$-module; in particular, $\Ext_R^i(k,R)=0$ for $i>0$.

By virtue of the fact that $R$ is a complete intersection ring, the Betti sequence of $k$ has polynomial growth; cf. Remark \ref{rmk:growthrates}(2).  Even better than this, as $R$ is Koszul, one can explicitly write down the Poincar\'e series of $k$.
\[
P_k^R(t)=\frac{1}{H_R(-t)}=\frac{1}{(1-t)^3}=\sum_{i\geq 0}{{i+2}\choose 2} t^i=\textstyle{\frac{1}{2}}\displaystyle\sum_{i\geq 0}(i^2+3i+2)\,t^i
\]
Since the Betti sequence of $k$ has quadratic growth, it is not exceptional. Further, the above resolution must be linear since $R$ is Koszul.  Also note that the Hilbert polynomial of $R$, given by $H_R(t)=1+3t+3t^2+t^3$, is symmetric.
\end{example}

%

\begin{example}
Let $R=k\llbracket w,x,y,z\rrbracket/(w^2,wx,x^2,y^2,z^2)$, which has an embedded deformation given by $k\llbracket w,x,y,z\rrbracket/(w^2,x^2,wx)=S\onto S/(y^2,z^2)\cong R$.  If one defines $M=R/(y,z)$, then one can check that $\Ext_R^i(M,R)$ vanishes for $i>0$.  Furthermore, the minimal free resolution of $M$ over $R$ is given by the following sequence.
\[
\cdots\to
R^4\xra{\left[\footnotesize{\begin{array}[c]{@{\hspace{0.25em}}c@{\hspace{1em}}c@{\hspace{0.5em}}r@{\hspace{0.5em}}r@{\hspace{0.25em}}}y&0&z&0\\0&z&0&y\\0&0&-y&-z\end{array}}\right]}
R^3\xra{\left[\footnotesize{\begin{array}[c]{@{\hspace{0.25em}}c@{\hspace{1em}}c@{\hspace{0.5em}}r@{\hspace{0.25em}}}y&0&z\\0&z&-y\end{array}}\right]}
R^2\xra{\left[\footnotesize{\begin{array}[c]{@{\hspace{0.25em}}c@{\hspace{1em}}c@{\hspace{0.25em}}}y&z\end{array}}\right]}
R\to M\to 0
\]
At this point, the Poincar\'e series of $M$ might be fairly obvious.  However, to be thorough set $P=k\llbracket w,x\rrbracket/(w^2,wx,x^2)$ and $Q=k\llbracket y,z\rrbracket/(y^2,z^2)$, and notice that $P\otimes_k Q\cong R$ and $P\cong M$ as $k$-algebras.  Therefore, let $\FF\onto k$ be a minimal $Q$-free resolution of $k\cong Q/(y,z)$.  The ranks of the free modules in $\FF$ are well-understood since $k$ is the residue field of $Q$;  {we} exhibit them in the following Poincar\'e series.
\begin{equation}\label{eq:poincare}
P_Q(t)=\frac{1}{H_{\gr_{\fm}(Q)}(-t)}=\frac{1}{(1-t)^2}=\sum_{i\geq 0}(i+1)\,t^i
\end{equation}
Since $\Tor^k_i(P,Q)=0$ for all $i>0$, $\FF\otimes_k P$ is a minimal free resolution of $M$ over $R$.  One can therefore conclude that the Poincar\'e series of $M$ over $R$ is the same as the one given in \eqref{eq:poincare}.  In particular, the Betti sequence of $M$ is exceptional.  Furthermore, recall that the Hilbert polynomial of $R$ is not symmetric; in fact, one has that $H_R(t)=1+4t+5t^2+2t^3$.
\end{example}

 {We} now consider the characterization of the Hilbert polynomial of an $\fm^4=0$ local ring whenever it admits modules with exponentially growing Betti numbers.

\begin{theorem}\label{thm:expext}
Let $M$ be a finitely generated module with an eventually linear minimal free resolution over a local ring $(R,\fm)$ with $\fm^4=0$, and suppose that the Betti sequence of $M$ has exponential growth of base $a$.  If $\Ext_R^i(M,R)$ vanishes for all $i\gg 0$, then $H_R(t)=1+et+ft^2+gt^3$, where
\begin{align*}
f&=\left(a+\frac{1}{a}\right)e-\left(a^2+1+\frac{1}{a^2}\right)\\[0.5\baselineskip]
g&=e-\left(a+\frac{1}{a}\right).
\end{align*}
\end{theorem}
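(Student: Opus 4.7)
The plan is to follow the same overall strategy used to prove Theorem \ref{thm:polyext}, adapted to the exponential setting. By hypothesis $\{b_i\}_{i\ge 0}$ has exponential growth of base $a>1$, so in particular it is not eventually constant. Lemma \ref{lem:delta1nonzero} then guarantees that $\Delta_1(n,n)=b_{n+1}b_{n+3}-b_nb_{n+2}\neq 0$ for all $n\gg 0$. Choosing $n$ large enough that also $n\ge\max\{\ld_R(M),m\}$, where $m$ is a bound beyond which $\Ext_R^i(M,R)=0$, Proposition \ref{prop:genformext} yields
\[
f=\frac{(b_{n+2}b_{n+3}-b_nb_{n+1})e-(b_{n+3}^2-b_n^2)}{b_{n+1}b_{n+3}-b_nb_{n+2}},\qquad
g=\frac{(b_{n+2}^2-b_{n+1}^2)e-(b_{n+2}b_{n+3}-b_nb_{n+1})}{b_{n+1}b_{n+3}-b_nb_{n+2}}.
\]
Since the left-hand sides are constants independent of $n$, both right-hand sides must stabilize as $n\to\infty$; the key is to identify the common limit.

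The next step is to insert the exponential approximation $b_i=\beta a^i+\rho_i$ with $\rho_i/a^i\to 0$. Every product appearing in the formulas for $f$ and $g$ has the form $\beta^2 a^{2n+k}+o(a^{2n+k})$ for a specific exponent $k$, so both numerator and denominator are of order $a^{2n+2}$. After dividing by $\beta^2 a^{2n+2}$, each difference $a^{2j}-a^{2i}$ factors through $a^2-1$; once this common factor of $a^2-1$ is cancelled with the denominator, the limits become explicit rational functions of $a$ and $e$ that are straightforward to simplify. The arithmetic yields exactly
\[
f=\left(a+\tfrac{1}{a}\right)e-\left(a^2+1+\tfrac{1}{a^2}\right).
\]

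For $g$, the most economical route is to avoid repeating the limit calculation and instead invoke Lemma \ref{lem:polyexplinres}(2), which gives $g=af-a^2 e+a^3$ already. Substituting the value of $f$ just obtained and simplifying,
\[
g=a\!\left[\left(a+\tfrac{1}{a}\right)e-\left(a^2+1+\tfrac{1}{a^2}\right)\right]-a^2 e+a^3=e-\left(a+\tfrac{1}{a}\right),
\]
which is the stated formula; if desired, the same expression can be verified directly by running the analogous limit argument on the Proposition \ref{prop:genformext} formula for $g$ as a consistency check.

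The main obstacle is not conceptual but bookkeeping: one must be sure that the $o(a^{2n+k})$ error terms really are subdominant after all cancellations. This is where Lemma \ref{lem:delta1nonzero} is essential, since it certifies that the denominator $b_{n+1}b_{n+3}-b_nb_{n+2}$ is of exact order $a^{2n+2}(a^2-1)$ for $n\gg 0$ rather than cancelling to a lower order; otherwise the limit argument could fail. Once this is in hand, the rest is routine algebraic simplification using the factorizations $a^{2j}-1=(a^2-1)(a^{2j-2}+a^{2j-4}+\cdots+1)$.
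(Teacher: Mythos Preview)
Your proposal is correct and follows essentially the same approach as the paper: invoke Lemma~\ref{lem:delta1nonzero} to justify applying Proposition~\ref{prop:genformext}, then pass to the limit using the exponential growth hypothesis to extract the stated expressions. The only noteworthy difference is that for $g$ you take the shortcut of substituting the computed $f$ into the relation $g=af-a^2e+a^3$ from Lemma~\ref{lem:polyexplinres}(2), whereas the paper computes $g$ directly from the Proposition~\ref{prop:genformext} formula by the same limiting procedure; both routes are equally valid and yield the same answer.
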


\begin{proof}
Let $\{b_i\}_{i\geq 0}$ denote the Betti sequence of $M$.  By assumption, for all $i\gg 0$ one has
\begin{equation}\label{eq:expbound}
\beta a^i-\rho_i\leq b_i\leq\beta a^i+\rho_i
\end{equation}
for some $\beta\in\RR^+$ and some sequence $\{\rho_i\}_{i\geq 0}$ of real numbers satisfying $\rho_i/a^i\to 0$.

Since Lemma \ref{lem:delta1nonzero} guarantees that $\Delta_1(i,i)\neq 0$ for all $i\gg 0$,  {we} proceed by bounding the expressions for $f$ and $g$ found in Proposition \ref{prop:genformext} using the bound in \eqref{eq:expbound}.  In the interest of space,  {we} omit the tedious details which are analogous to those found in the proof of Lemma \ref{lem:polyexplinres}(2).  Indeed, one obtains the following expressions
\begin{align*}
f&=\frac{(a^5-a)e-(a^6-1)}{a^4-a^2}\\[0.5\baselineskip]
g&=\frac{(a^4-a^2)e-(a^5-a)}{a^4-a^2}
\end{align*}
which simplify to yield the result.
\end{proof}

An immediate corollary to the previous result is apparent if  {we} consider the fact that $f$ and $g$ must be positive integers.

\begin{corollary}
Let $M$ be a finitely generated module with an eventually linear minimal free resolution over a local ring $(R,\fm)$ with $\fm^4=0$, and suppose that $\Ext_R^i(M,R)$ vanishes for all $i\gg 0$.  If the Betti sequence of $M$ has exponential growth of base $a$, then
\[
a=r+s\sqrt{\alpha}
\]
for some $r,s\in\QQ$ and some $\alpha\in\ZZ^+$ satisfying $r^2-\alpha s^2=1$.
\end{corollary}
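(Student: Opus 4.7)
The plan is to extract integrality information from the explicit formulas in Theorem \ref{thm:expext} and then solve a quadratic for $a$. Recall that Theorem \ref{thm:expext} gives
\[
f = \left(a + \frac{1}{a}\right)e - \left(a^2 + 1 + \frac{1}{a^2}\right), \qquad g = e - \left(a + \frac{1}{a}\right),
\]
where $e$, $f$, and $g$ are the positive integer coefficients of the Hilbert polynomial $H_R(t) = 1 + et + ft^2 + gt^3$ (note that $g > 0$ since $\fm^3 \neq 0$).

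The first key step is to observe that the relation $g = e - (a + 1/a)$ combined with $e, g \in \ZZ$ forces $a + 1/a$ to be an integer. Set $N := a + 1/a$. Since exponential growth requires $a > 1$, we have $N > 2$, and hence $N \geq 3$. (As a sanity check, the formula for $f$ rewrites as $f = Ne - (N^2 - 1)$, which is automatically an integer under these constraints and imposes no further restriction.)

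Next, I would solve $a^2 - Na + 1 = 0$ to obtain
\[
a = \frac{N + \sqrt{N^2 - 4}}{2},
\]
taking the root exceeding $1$. Setting $r := N/2 \in \QQ$, $s := 1/2 \in \QQ$, and $\alpha := N^2 - 4 \in \ZZ^+$ (which is positive because $N \geq 3$), we obtain $a = r + s\sqrt{\alpha}$ and verify the Pell-type identity
\[
r^2 - \alpha s^2 = \frac{N^2}{4} - \frac{N^2 - 4}{4} = 1.
\]

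There is no real obstacle here; the corollary is a direct arithmetic consequence of Theorem \ref{thm:expext} once the integrality of $e$ and $g$ is exploited. One minor point worth remarking in passing is that $N^2 - 4 = (N-2)(N+2)$ is never a perfect square for $N \geq 3$ (as $(N-k)(N+k)=4$ has no integer solutions with $N \geq 3$), so the representation produced by the proof is genuinely of the stated form with $a$ irrational, consistent with the hypothesis that the base $a$ of exponential growth is a quadratic irrational satisfying a Pell-type unit equation.
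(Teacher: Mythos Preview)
Your proof is correct and, in fact, cleaner than the paper's. Both arguments start from the same observation---that $g=e-(a+1/a)$ with $e,g\in\ZZ$ forces $N:=a+1/a$ to be an integer---but they diverge from there. The paper proceeds abstractly: it first posits the generic form $a=r+s\sqrt{\alpha}$ (since $a$ satisfies a quadratic over $\ZZ$), then spends most of its effort ruling out $a\in\QQ$ via a separate contradiction argument, and finally computes
\[
a+\frac{1}{a}=\frac{r(r^2-\alpha s^2+1)+s(r^2-\alpha s^2-1)\sqrt{\alpha}}{r^2-\alpha s^2}
\]
to conclude that integrality forces $r^2-\alpha s^2=1$. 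Your argument instead solves $a^2-Na+1=0$ directly and exhibits explicit values $r=N/2$, $s=1/2$, $\alpha=N^2-4$, verifying the Pell identity by a one-line computation. This constructive route is shorter, and your closing remark that $N^2-4$ is never a perfect square for $N\geq 3$ recovers the irrationality of $a$ as a byproduct rather than as a separate lemma. The paper's approach has the mild advantage of showing that \emph{any} representation $a=r+s\sqrt{\alpha}$ (with $\alpha$ not a square) must satisfy the Pell condition, whereas yours produces one such representation; but for the corollary as stated, existence is all that is claimed.
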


\begin{proof}
By Theorem \ref{thm:expext}: (1) $a$ must satisfy some quadratic equation with integer coefficients --- thus, one can write $a=r+s\sqrt{\alpha}$ for some $r,s\in\QQ$ and $\alpha\in\NN$; (2) the sum of $a$ and its reciprocal must be an integer.

If $a\in\QQ$, one can write $a=\displaystyle{\frac{p}{q}}$, where $p,q\in\ZZ^+$ are relatively prime.  Then
\[
a+\frac{1}{a}=\frac{p}{q}+\frac{q}{p}=\frac{p^2+q^2}{pq}\in\ZZ
\]
which implies that $p^2-npq+q^2=0$ for some $n\in\ZZ$.  Solving for $p$ yields
\[
p=\frac{nq\pm\sqrt{n^2q^2-4q^2}}{2}=\frac{nq\pm q\sqrt{n^2-4}}{2}.
\]
In order for this quantity to be an integer, it must be true that $n^2-4$ is a perfect square.  Since there is no Pythagorean triple of the form $(2,m,n)$, it follows that $n=2$, which corresponds to the case that $p=q=1$ --- a contradiction.

Since $a$ is irrational, one can assume that $s,\alpha\neq 0$.  Furthermore, it is an easy exercise to check that
\[
a+\frac{1}{a}=\frac{r(r^2-\alpha s^2+1)+s(r^2-\alpha s^2-1)\sqrt{\alpha}}{r^2-\alpha s^2}
\]
which must be an integer, whence it follows that $r^2-\alpha s^2=1$.
\end{proof}

In light of Lemma \ref{lem:polyexplinres}, if one wishes to find an $\fm^4=0$ local ring which admits linear resolutions and has a Hilbert polynomial which is \emph{not} balanced, it would be natural to expect the ring to only admit exponentially growing Betti sequences.  In fact, Theorem \ref{thm:expext} does not even guarantee that the Hilbert polynomial of such a ring is balanced in the case that the module satisfies the vanishing of Ext condition.  The next example illustrates this scenario.

\begin{example}\label{ex:notbal}
Define local rings $S=k\llbracket x,y,z\rrbracket/(x^2-y^2,x^2-z^2,xy,xz,yz)$ and $Q=k\llbracket u,v\rrbracket/(u^2,uv,v^2)$, with maximal ideals $\fm_S=(x,y,z)$ and $\fm_Q=(u,v)$, respectively.  According to \cite[Construction 3.1]{Be12}, the local ring
\[
R:=S\otimes_k Q\cong k\llbracket u,v,w,x,y,z\rrbracket/(u^2,uv,v^2,x^2-y^2,x^2-z^2,xy,xz,yz)
\]
admits non-trivial totally reflexive modules; in particular, $M=R/(x,y,z)$ is one such module.

One would assume that the Betti sequence of $M$ over $R$ would coincide with that of $k\cong S/\fm_S$ over $S$.  To check this, first note that $M\cong Q$ as $k$-algebras.  Further, let $\FF\onto k$ be a minimal $S$-free resolution.  Since $\Tor^k_i(S,Q)$ vanishes for $i>0$, it follows that $\FF\otimes_k Q$ is a minimal free resolution of $M$ over $R$.  As $S$ is a Gorenstein ring satisfying $\fm_S^3=0$, the Betti sequence of the residue field $k$ over $S$ has exponential growth.  Therefore, the same must be true of the Betti sequence of $M$ over $R$.

Furthermore notice that one can easily check that the Hilbert polynomial
\[
H_R(t)=1+5t+7t^2+2t^3
\]
of $R$ is clearly not balanced.  Also, by using the statement of Theorem \ref{thm:expext}, one can recover the base $a$ of the exponential growth of the Betti sequence of $M$.  Indeed,
\begin{align*}
a&=\frac{e-g\pm\sqrt{((g-e)^2-4)}}{2}\\
&=\frac{3\pm\sqrt{5}}{2}
\end{align*}
which implies that $a=\frac{3}{2}+\frac{1}{2}\sqrt{5}$.
\end{example}

\begin{remark}\label{rmk:torindep}
Indeed, one can generalize the previous example.  To this end, let $S=k\llbracket x_1,\ldots,x_n\rrbracket/I$ and $Q=k\llbracket y_1,\ldots,y_m\rrbracket/J$ where $I$ is generated over $k\llbracket x_1,\ldots,x_n\rrbracket$ by $x_1^2-x_j^2$ and $x_ix_j$ for $0\leq i<j\leq n$, and where $J$ is generated over $k\llbracket y_1,\ldots,y_m\rrbracket$ by $y_iy_j$ for $1\leq i\leq j\leq n$.  It is clear to see that $S$ is a Gorenstein local ring with Hilbert polynomial $H_S(t)=1+nt+t^3$, and that $Q$ is a Cohen-Macaulay local ring with Hilbert polynomial $H_Q(t)=1+mt$.  Furthermore, since $S$ and $Q$ are Tor-independent $k$-algebras, the Hilbert polynomial of $R:=S\otimes_k Q$ is given by
\begin{align*}
H_R(t)&=H_S(t)\cdot H_Q(t)\\
&=(1+nt+t^2)(1+mt)\\
&=1+(n+m)t+(1+nm)t^2+mt^3
\end{align*}
which is only balanced if $m=1$ or if $n=2$.
\end{remark}

 {We} now turn our attention to necessary conditions for the existence of $R$-modules $M$ which satisfy both $\Ext_R^i(M,R)=0$ and $\Ext_R^i(M^*,R)=0$ for all $i\gg 0$.

\subsection{Asymmetric partially complete resolutions}\label{sec:asymmetric}

Our ultimate goal for this section is to investigate necessary conditions for an $\fm^4=0$ local ring to admit certain asymmetric (eventually) linear resolutions which are partially complete.  However, our actual results are even more general than this:  {we} only require that $R$ admit two modules of differing growth in their Betti sequences.

The idea behind the results in this section is that if a local ring admits modules satisfying the hypotheses of both Theorem \ref{thm:polyext} and Theorem \ref{thm:expext}, then its Hilbert polynomial must take on both of the respective forms.  It is straightforward to see that a study of linear vs.\ linear asymmetric resolutions will not reveal any additional information about the ring's Hilbert polynomial.  Therefore,  {we} restrict our investigation to the remaining two cases.    {We} begin by considering the sort of asymmetric growth of Betti numbers which is apparent in Example \ref{ex:joseg}: polynomial vs.\ exponential growth.

\begin{theorem}\label{thm:polyexp}
Let $M$ and $N$ be finitely generated modules, each with an eventually linear minimal free resolution over a local ring $(R,\fm)$ satisfying $\fm^4=0$, and suppose that $\Ext_R^i(N,R)$ vanishes for $i\gg0$. If the Betti sequence of $M$ has polynomial growth, whereas the Betti sequence of $N$ has exponential growth, then
\[
H_R(t)=1+et+et^2+t^3.
\]
\end{theorem}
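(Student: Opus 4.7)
The plan is to combine the two ``halves'' of information: apply Lemma \ref{lem:polyexplinres}(1) to $M$ and Theorem \ref{thm:expext} to $N$, which both describe $H_R(t)$ for the same ring $R$, then equate and solve. Note that Lemma \ref{lem:polyexplinres}(1) requires only an eventually linear resolution and polynomial growth (no Ext-vanishing), so it applies to $M$ with no additional hypotheses; Theorem \ref{thm:expext} needs the Ext-vanishing, which we have for $N$.

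First, Lemma \ref{lem:polyexplinres}(1) applied to $M$ gives
\[
H_R(t)=1+et+ft^2+(f-e+1)t^3.
\]
Theorem \ref{thm:expext} applied to $N$ (with exponential growth of some base $a>1$) gives
\[
H_R(t)=1+et+ft^2+gt^3, \qquad f=\left(a+\tfrac{1}{a}\right)e-\left(a^2+1+\tfrac{1}{a^2}\right), \quad g=e-\left(a+\tfrac{1}{a}\right).
\]
Setting $u:=a+\tfrac{1}{a}$ (so $a^2+1+\tfrac{1}{a^2}=u^2-1$), I compare the two resulting coefficient descriptions: the $t^3$ coefficient yields $f=2e-1-u$, while the $t^2$ coefficient yields $f=ue-(u^2-1)$.

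Equating these two expressions for $f$, I expect the identity
\[
(2-u)e=(2-u)(u+1),
\]
after rearrangement and factoring (using $-(u^2-u-2)=-(u-2)(u+1)$). The argument then splits into two cases: either $u=2$, which forces $a+\tfrac{1}{a}=2$ and hence $a=1$, contradicting the exponential growth assumption $a>1$; or $e=u+1$. In the second (only feasible) case, substituting back gives $g=e-u=1$ and $f=2e-1-u=e$, yielding $H_R(t)=1+et+et^2+t^3$ as required.

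Essentially no step is a genuine obstacle: the main content is bookkeeping with the substitution $u=a+\tfrac{1}{a}$ to keep the algebra compact, and then the mild care required to exclude $u=2$ via the standing hypothesis $a>1$ built into Definition \ref{def:exponential}. I do not need to invoke the system \eqref{setup:redmatrixeq} directly here, since both Lemma \ref{lem:polyexplinres} and Theorem \ref{thm:expext} already encode the necessary consequences of that system.
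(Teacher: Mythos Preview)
Your proposal is correct and follows essentially the same approach as the paper: combine Lemma \ref{lem:polyexplinres}(1) for $M$ with Theorem \ref{thm:expext} for $N$, then solve the resulting pair of equations for $f$ and $g$. The only cosmetic differences are that you introduce the abbreviation $u=a+\tfrac{1}{a}$ and equate the two expressions for $f$ (whereas the paper equates two expressions for $g$), and you make the exclusion of the degenerate case $u=2$ (i.e.\ $a=1$) explicit rather than leaving it implicit in the hypothesis $a>1$.
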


\begin{proof}
By Lemma \ref{lem:polyexplinres}(1),  the Hilbert polynomial of $R$ must be balanced; that is, $H_R(t)=1+et+ft^2+(f-e+1)t^3$.  Furthermore, given the growth of the Betti sequence of $N$ and the fact that $\Ext_R^i(N,R)=0$ for $i\gg 0$, one can use the characterization of $f$ in Theorem \ref{thm:expext} to obtain
\begin{align*}
g&=f-e+1\\[0.5\baselineskip]
&=\left(a+\frac{1}{a}\right)e-\left(a^2+1+\frac{1}{a^2}\right)-e+1\\[0.5\baselineskip]
&=\left(a-1+\frac{1}{a}\right)e-\left(a^2+\frac{1}{a^2}\right).
\end{align*}
However, by Theorem \ref{thm:expext} one has $g=e-\left(a+\displaystyle{\frac{1}{a}}\right)$.  Equating these expressions for $g$ yields
\[
e=a+1+\frac{1}{a}.
\]
It is straightforward to check that the Hilbert polynomial of an $\fm^4=0$ local ring with this embedding dimension must be symmetric.
\end{proof}

\begin{remark}
In light of Theorem \ref{thm:polyexp}, it is impossible for the ring illustrated in Example \ref{ex:notbal} to admit Koszul modules with polynomially growing Betti sequences.  In particular, that this implies the ring does not have an exact pair of zero divisors.
\end{remark}


Our final result essentially states that asymmetric complete resolutions with exponential vs.\ exponential growth cannot occur.

\begin{theorem}\label{thm:expexp}
Let $M$ and $N$ be finitely generated modules, each with an eventually linear minimal free resolution over a local ring $(R,\fm)$ satisfying $\fm^4=0$, and  suppose that $\Ext_R^i(M,R)=0=\Ext_R^i(N,R)$ for all $i\gg0$.  If the Betti sequences of $M$ and $N$ have exponential growth of bases $a$ and $b$, respectively, then $a=b$.
\end{theorem}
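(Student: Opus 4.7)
The proof I have in mind is essentially a one-line application of Theorem \ref{thm:expext} applied twice. The point is that the Hilbert polynomial of $R$ is an invariant of the ring, so when we compute it via two different modules we must obtain the same coefficients.

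First I would apply Theorem \ref{thm:expext} to $M$, whose Betti sequence has exponential growth of base $a$ and which satisfies the eventual vanishing of $\Ext_R^*(M,R)$. This expresses
\[
H_R(t) = 1 + et + ft^2 + gt^3, \qquad g = e - \left(a + \tfrac{1}{a}\right).
\]
Next I would apply Theorem \ref{thm:expext} to $N$, whose Betti sequence has exponential growth of base $b$ and which also satisfies eventual vanishing of $\Ext_R^*(N,R)$, obtaining
\[
g = e - \left(b + \tfrac{1}{b}\right)
\]
for the same coefficient $g$ (and the same $e = \embdim R$).

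Equating the two expressions for $g$ gives the key identity $a + \tfrac{1}{a} = b + \tfrac{1}{b}$. Rearranging yields $(a-b)(ab - 1) = 0$, so either $a = b$ or $ab = 1$. Since, by Definition \ref{def:exponential}, both $a$ and $b$ are real numbers strictly greater than $1$, the product $ab$ exceeds $1$, ruling out the second possibility. Therefore $a = b$, as desired.

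The only potentially delicate point is ensuring that both applications of Theorem \ref{thm:expext} are legitimate — that is, that both $M$ and $N$ have eventually non-constant Betti sequences (so that the system \eqref{setup:redmatrixeq} has full rank and Proposition \ref{prop:genformext} applies as invoked inside Theorem \ref{thm:expext}). But this is automatic from the exponential growth hypothesis with base $a, b > 1$: such sequences are eventually strictly increasing, hence by Remark \ref{rmk:delta}(3) they are eventually non-constant and $\Delta_1(i,i) \neq 0$ for all $i \gg 0$ by Lemma \ref{lem:delta1nonzero}. No obstacle of substance remains.
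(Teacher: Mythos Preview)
Your proof is correct and essentially identical to the paper's: both apply Theorem \ref{thm:expext} to $M$ and $N$, equate the resulting expressions for $g$, and use $a,b>1$ to rule out $ab=1$. The only cosmetic difference is that the paper phrases it as a proof by contradiction, whereas you argue directly and add a brief (unnecessary but harmless) verification that the hypotheses of Theorem \ref{thm:expext} are met.
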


\begin{proof}
Suppose the contrary.  By Theorem \ref{thm:expext} one has
\[
g=e-\left(a+\frac{1}{a}\right)=e-\left(b+\frac{1}{b}\right)
\]
which simplifies to yield $ab=1$.  Since both $a$ and $b$ must be larger than one,  {we} have reached a contradiction.
\end{proof}

%

\bibliographystyle{amsxport}
\bibliography{mybib}

\end{document}